\documentclass[12pt]{amsart}
\usepackage{amsmath}
\usepackage{amssymb}
\usepackage{amsfonts}
\usepackage{mathrsfs}
\usepackage{fullpage}

\newcommand{\g}{\mathfrak{g}}
\newcommand{\h}{\mathfrak{h}}
\newcommand{\mfk}{\mathfrak{k}}
\newcommand{\mfp}{\mathfrak{p}}

\newcommand{\mfb}{\mathfrak{b}}
\newcommand{\mfn}{\mathfrak{n}}

\newcommand{\bbZ}{\mathbb{Z}}
\newcommand{\bbC}{\mathbb{C}}

\newcommand{\bbQ}{\mathbb{Q}}

\newcommand{\cO}{\mathcal{O}}
\newcommand{\cB}{\mathcal{B}}

\DeclareMathOperator{\sgn}{\mathrm{sgn}}

\newtheorem{theorem}{Theorem}[section]
\newtheorem{proposition}[theorem]{Proposition}
\newtheorem{lemma}[theorem]{Lemma}

\theoremstyle{definition}
\newtheorem{definition}[theorem]{Definition}
\newtheorem{notation}[theorem]{Notation}

\theoremstyle{remark}

\newtheorem{remark}[theorem]{Remark}
\subjclass{Primary 22E50, Secondary 05E10}

\begin{document}

\title[Character Multiplicity Inversion by Induction]{Classical and Signed 
Kazhdan-Lusztig Polynomials:  Character Multiplicity Inversion by Induction}

\begin{abstract}
The famous Kazhdan-Lusztig Conjecture of the 1970s states that the multiplicity 
of an irreducible composition factor of a Verma module can be computed by 
evaluating Kazhdan-Lusztig polynomials at 1.  Thus the character of a Verma 
module is a linear combination of characters of irreducible highest weight 
modules where the coefficients in the linear combination are Kazhdan-Lusztig 
polynomials evaluated at 1.  Kazhdan-Lusztig showed that inverting and 
writing the character of an irreducible highest weight module as a linear 
combination of characters of Verma modules, the coefficients in the linear 
combination are also Kazhdan-Lusztig polynomials evaluated at 1, up to a 
sign.  In this paper, we show how to prove Kazhdan-Lusztig's character 
multiplicity inversion formula by induction using coherent continuation 
functors.  Unitary representations may be identified by determining if 
characters and signature characters are the same.  The signature character 
of a Verma module may be written as a linear combination of signature 
characters of irreducible highest weight modules where the coefficients in 
the linear combination are signed Kazhdan-Lusztig polynomials evaluated at 1.  
An analogous argument by induction using coherent continuation functors proves 
an analogous multiplicity inversion formula for signature characters:  the 
signature character of an irreducible highest weight module is a linear 
combination of signature characters of Verma modules where the coefficients,
up to a sign, are also signed Kazhdan-Lusztig polynomials evaluated at 1.
\end{abstract}

\author{Wai~Ling~Yee}
\address{Department of Mathematics and Statistics \\ 
University of Windsor \\
Windsor, Ontario \\
CANADA}
\email{wlyee@uwindsor.ca}
\thanks{The author is grateful for the support from a Discovery 
Grant and UFA from NSERC, and NSF grants DMS-0554278 and DMS-0968275.}

\maketitle

\section{Introduction}
Finding the composition factor multiplicities for a Verma module was one of 
the premier open problems in representation theory in the 1970s 
and the early 1980s.  Kazhdan and Lusztig conjectured in their seminal 
paper \cite{KL} that the multiplicities equaled Kazhdan-Lusztig polynomials 
evaluated at $1$.  Jantzen's Conjecture provided even deeper insight into 
the structure of a Verma module:  the various coefficients of the polynomials 
gave multiplicities of an irreducible composition factor within particular 
levels of the Jantzen filtration of the Verma module.  Brylinski-Kashiwara and 
Beilinson-Bernstein independently proved the Kazhdan-Lusztig Conjecture.  
Subsequently, Beilinson-Bernstein proved Jantzen's Conjecture in \cite{BB}.

The most common approach to classifying unitary representations is to first 
identify representations which admit invariant Hermitian forms, compute the 
signatures of those forms, and then determine which forms are definite.  The 
signature of the Hermitian form may be stored as a signature character, and 
then determining unitarity is equivalent to testing if the character and the 
signature character are the same.  See \cite{W}, \cite{Y3}, \cite{Y}, and 
\cite{Y2} for work on signature characters for irreducible highest weight 
modules.  Because understanding the structure of each 
level of the Jantzen filtration is crucial to computing signatures of 
invariant Hermitian forms, signed Kazhdan-Lusztig polynomials were defined in 
\cite{Y}, which store signature information for each level of the Jantzen 
filtration in its coefficients rather than multiplicities.

Consider $\g$, a complex semisimple Lie algebra, and $\h$, a Cartan subalgebra 
of $\g$.  Choose positive roots, let $\rho$ be the half sum of the positive 
roots, and let $\lambda \in \h^*$ be antidominant.  Let $\g = \mfn \oplus 
\h \oplus \mfn^-$ be the triangular decomposition and $\mfb = \h \oplus \mfn$ 
the Borel subalgebra determined by the choice 
of positive roots and let $M(\mu) = U(\g) \otimes_{U(\mfb)} 
\bbC_{\mu - \rho}$.
Let $W_{\lambda}$ be the integral Weyl group and $w_\lambda^0$ its long 
element.  Given $x \in W_{\lambda}$, 
composition factors of $M(x \lambda )$ are of the form $L(y \lambda )$ where 
$y \in W_\lambda$ and $y \leq x$.  The multiplicity of $L(y \lambda )$ in 
$M(x \lambda )$ is given by
$$[M(x \lambda) : L( y \lambda )] = P_{w_\lambda^0 x, w_\lambda^0 y}(1)$$
from which it follows that
$$ ch\, M(x \lambda) = \sum_{y \leq x} P_{w_\lambda^0 x, w_\lambda^0 y} (1) 
\,ch\, L( y \lambda ).$$
Kazhdan and Lusztig's inversion formula indicates that 
$$ch\, L( x \lambda ) = \sum_{y \leq x} (-1)^{\ell(x) - \ell(y)} P_{y,x}(1) 
\,ch\, M(y \lambda ).$$

In \cite{Y}, signed Kazhdan-Lusztig polynomials were defined in order to 
compute signatures of invariant Hermitian forms.  Assume $\g$ and $\h$ to 
be complexifications of $\g_0 \supset \h_0$ and assume $\h_0$ to be stable 
under $\theta$, the Cartan involution of $\g_0$.  Then $M( \mu )$ admits an 
invariant Hermitian form if and only if $\theta( \mfb ) = \mfb$ and $\mu$ 
is imaginary.  Invariant Hermitian forms on $M( \mu )$ are unique up to a real 
scalar.  The canonical form is called the Shapovalov form.  Its radical is the 
unique maximal proper submodule of $M( \mu )$, and hence it descends to a 
canonical invariant Hermitian form on $L( \mu )$.  Assume the real form to be 
equal rank.  Then $\theta( \mfb ) = \mfb$ implies that all of the roots are 
imaginary.  Assume that $\lambda$ is imaginary and antidominant.  Then the 
signed Kazhdan-Lusztig polynomials satisfy for small $t > 0$:
$$ e^{w(\rho) t} ch_s \, M( x \lambda + w(-\rho) t ) = \sum_{y \leq x}
P^{\lambda,w}_{w_\lambda^0 x, w_\lambda^0 y}( 1 ) \,ch_s\, L(y \lambda ).$$
The inversion formula providing an expression for $ch_s \, L(x \lambda )$ 
as a linear combination of $ch_s \, M(y\lambda + w(-\rho)t)$ found in 
\cite{Y} is cumbersome.  Particularly in 
light of a close relationship proved in \cite{Y2} between classical and 
signed Kazhdan-Lusztig polynomials, we are able to prove by induction using 
coherent continuation functors in the equal rank case the following formula, 
which is analogous to the inversion formula for characters: \\
{\bf Main Theorem:}
$$ ch_s \, L( x \lambda ) =  e^{w_\lambda^0(\rho)t} \sum_{y \leq x} 
(-1)^{\ell(x) - \ell(y)} 
P^{\lambda, w_\lambda^0}_{y,x}(1)  ch_s \,  M( y \lambda + w_\lambda^0(-\rho)t).$$
Here, $\epsilon$ is the $\bbZ_2$-grading on the root lattice with 
$\epsilon(\mu)$ being equal to the parity of the number of non-compact roots 
in an expression for $\mu$ as a sum of roots.

The paper is structured as follows.

The first half of the paper deals with classical Kazhdan-Lusztig polynomials.
First,  we review the relationship between Kazhdan-Lusztig polynomials and the 
Jantzen filtration in section 2.  We then review the coherent continuation 
functor in section 3.  In section 4, we prove Kazhdan and Lusztig's character 
multiplicity inversion formula by induction using coherent continuation 
functors.  Just as Gabber and Joseph showed in \cite{GJ} that Jantzen's 
Conjecture and coherent continuation functors gave recursive formulas for 
computing Kazhdan-Lusztig polynomials, permitting the stepwise computation of 
Kazhdan-Lusztig polynomials, applying coherent continuation 
functors allows us to prove a character multiplicity inverison formula by 
induction.

The second half of the paper concerns signed Kazhdan-Lusztig polynomials.
In section 5, we review signature character theory for highest weight modules, 
the Jantzen filtration,  and the definition of signed Kazhdan-Lusztig 
polynomials.  In section 6, we discuss coherent continuation functors in the 
enhanced setting used to compute signature characters.  In section 7, we
apply the same induction argument to signature characters in the case where 
the real form is compact to obtain an inversion formula.  In section 8, we 
show how a simple change of basis argument provides an inversion formula 
for other equal rank real forms.

Section 9 contains a description of future work.

\subsection{Acknowledgements:}  David Vogan and Gregg Zuckerman discussed  
using the coherent continuation representation of the Weyl group to compute
characters of irreducible Harish-Chandra modules at the 2010 Atlas of Lie 
Groups workshop.  Building representations using coherent continuation is the 
philosophy used in this paper.

\section{Kazhdan-Lusztig Polynomials and Jantzen's Conjecture}
\label{KLJantzen}
We use the following setup in the first half of this paper:
\begin{notation}
\begin{itemize}
\item[-] $\g$ is a complex semisimple Lie algebra
\item[-] $\h$ is a Cartan subalgebra
\item[-] $\Delta(\g,\h)$ is the set of roots
\item[-] $\Delta^+(\g,\h)$ is a choice of positive roots
\item[-] $\rho$ is one half the sum of the positive roots
\item[-] $\Pi$ is the set of simple roots
\item[-] $\g = \mfn \oplus \h \oplus \mfn^-$ is the corresponding triangular 
	decomposition
\item[-] $\mfb = \h \oplus \mfn$ is the corresponding Borel subalgebra
\item[-] $\Lambda_r$ is the root lattice
\item[-] $\Lambda$ is the integral weight lattice
\item[-] $\lambda \in \h^*$ is antidominant
\item[-] $W_\lambda$ is the integral Weyl group and $w_\lambda^0$ its long 
	element
\item[-] $\Delta_\lambda : = \{ \alpha \in \Delta(\g,\h) : (\lambda, 
\alpha^\vee) \in \bbZ \}$.  It is a root system with Weyl group $W_\lambda$.
\item[-] $\Pi_\lambda$ is a set of simple roots for $\Delta_\lambda$ 
	determined by $\rho$
\item[-] Given $\mu \in \h^*$, $M( \mu ) := U( \g ) \otimes_{U(\mfb)} 
	\bbC_{\mu - \rho}$ is the Verma module of highest weight $\mu - \rho$.
\item[-] $v_{\mu - \rho}$ the canonical generator of $M( \mu )$.
\item[-] $L(\mu)$ is the quotient of $M(\mu)$ by its unique maximal proper 
	submodule.  It is an irreducible highest weight module.
\end{itemize}
\end{notation}

We begin by reviewing the Shapovalov form and the Jantzen filtration.
Let $\sigma$ be the unique involutive automorphism of $\g$ such that 
$$\sigma( X_\alpha ) = Y_\alpha, \qquad \sigma( Y_\alpha ) = X_\alpha, \qquad
\sigma( H_\alpha ) = H_\alpha$$
where $\alpha \in \Delta^+(\g,\h)$ and $X_\alpha \in \g_\alpha$, $Y_\alpha \in 
\g_{-\alpha}$, and $H_\alpha \in \h$ are Chevalley generators of $\g$.  Then 
there exists a bilinear form $( \cdot, \cdot ) : M(\mu) \times 
M(\mu) \to \bbC$ that is contravariant:  contravariant means that 
$$ ( x \cdot u, v ) = (u, \sigma( x ) v ) \qquad \forall \, x \in \g
\text{ and } u, v \in M(\mu).$$
Normalized so that $(v_{\mu - \rho}, v_{\mu - \rho}) = 1$, the form is unique 
and called the (classical) Shapovalov form.
We denote it by $( \cdot, \cdot )_{\mu}$.
This form exists for every Verma module.

\begin{definition}  Let $V$ be a finite-dimensional vector space and let 
$(\cdot, \cdot )_t$ be an analytic family of bilinear forms on $V$ where 
$t \in (-\delta, \delta)$ and the forms are non-degenerate for $t \neq 0$.  
The Jantzen filtration is defined to be:
$$V = V^{(0)} \supset V^{(1)} \supset \cdots \supset V^{(n)} = \{ 0 \}$$
where $v \in V^{(n)}$ if there is exists an analytic map $\gamma_v : 
(-\epsilon, \epsilon) \to V$ for some $\epsilon > 0$ such that:
\begin{enumerate}
\item $\gamma_v( 0) = v$ and 
\item for every $u \in V$, $( \gamma_v(t), u )_t$ vanishes at least to order 
$n$ at $t=0$.
\end{enumerate}
\end{definition}

Since $(h \cdot u, v )_\mu = (u, h \cdot v )_\mu$ for all $h \in \h$ and 
$u,v \in M(\mu )$, therefore the weight space decomposition is an orthogonal 
decomposition into finite-dimensional subspaces.  Under the canonical 
identification of any $M( \mu )$ with $U( \mfn^-)$, it makes sense to take 
an analytic path of highest weights equal to $\mu$ at $t=0$ and to consider 
the corresponding Jantzen filtration on $M( \mu )$.  It was proved by 
Barbasch in \cite{Ba} that the Jantzen filtration does not depend on the 
analytic path chosen.

Let $\lambda$ be antidominant and let $x \in W_\lambda$. It is well-known that
the $j^{\text{th}}$ level of the Jantzen filtration, $M(x\lambda)_j := 
M(x\lambda)^{(j)} / M(x\lambda)^{(j+1)}$, is semisimple.  It is a direct sum 
of irreducible highest weight modules of the form $L( y \lambda )$ where 
$y \in W_\lambda$ and $y \leq x$.  Jantzen's Conjecture gives the multiplicity 
of $L( y \lambda )$ in the $j^{\text{th}}$ level of the Jantzen filtration.

\begin{theorem}(\cite{BB}) Jantzen's Conjecture:  Let $\lambda$ be 
antidominant and $x, y \in W_\lambda$.  Then:
$$[M(x\lambda)_j : L(y\lambda)] = \text{coefficient of } q^{(\ell(x) - \ell(y)
-j)/2} \text{ in } P_{w_\lambda^0 x, w_\lambda^0 y}(q).$$
\end{theorem}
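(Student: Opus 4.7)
The plan is to prove the theorem geometrically, following the approach of \cite{BB}, by matching the Jantzen filtration on $M(x\lambda)$ with the weight filtration on a standard perverse sheaf under Beilinson-Bernstein localization.

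First, I would pass from the category of $\g$-modules with (generalized) infinitesimal character $\chi_\lambda$ to a category of twisted $D$-modules on the flag variety $G/B$, or equivalently mixed perverse sheaves via Riemann-Hilbert. Under this equivalence, the Verma module $M(x\lambda)$ corresponds to the standard object $j_{x,!}$ attached to the Schubert stratum $BxB/B$, and the irreducible $L(y\lambda)$ corresponds to the intersection cohomology complex $IC_y$ on the Schubert variety $\overline{BxB/B}$. The geometric interpretation of Kazhdan-Lusztig polynomials from \cite{KL} then expresses the coefficients of $P_{w_\lambda^0 x, w_\lambda^0 y}(q)$ as dimensions of graded pieces of the stalks of $IC_y$ at a point of $BxB/B$, indexed by the weight filtration.

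Next, I would interpret the Jantzen filtration geometrically. Given an analytic deformation $\lambda + t\nu$ of the highest weight along a suitable direction, localization yields a one-parameter family of twisted $D$-modules over a disc whose fibre at $t=0$ corresponds to $M(x\lambda)$, and the order of vanishing of the Shapovalov form at $t=0$ translates into a filtration on $j_{x,!}$. The technical heart of the argument, and the main obstacle, is to show that this filtration coincides with the weight filtration on $j_{x,!}$ in the category of mixed Hodge modules (or equivalently via a comparison argument with mixed $\ell$-adic sheaves). This identification rests on a monodromy-weight analysis of the nearby-cycles functor applied to the deformation, combined with the rigidity of the weight filtration on complexes of geometric origin.

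Once the Jantzen filtration is identified with the weight filtration, the theorem follows directly: the multiplicity of $L(y\lambda)$ in $M(x\lambda)_j$ equals the multiplicity of $IC_y$ in the $j$-th weight-graded piece of $j_{x,!}$, which by Deligne's purity theorem and the stalk formula of \cite{KL} is precisely the coefficient of $q^{(\ell(x) - \ell(y) - j)/2}$ in $P_{w_\lambda^0 x, w_\lambda^0 y}(q)$. The shift by $w_\lambda^0$ is built into the normalization of Kazhdan-Lusztig polynomials when one uses the antidominant convention for $\lambda$. All remaining ingredients — localization, the geometric formula for $P_{x,y}$, and purity — are standard once the Jantzen-equals-weight statement is in hand.
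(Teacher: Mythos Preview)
The paper does not prove this theorem at all: it is stated with attribution to \cite{BB} and used as a known input. There is therefore no proof in the paper to compare your proposal against. Your outline is a reasonable high-level sketch of the Beilinson--Bernstein argument that the paper is citing, so in that sense it is ``the same approach,'' but be aware that in the context of this paper no argument is expected---the result is taken as a black box on which the rest of the paper builds.
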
 

To compute Kazhdan-Lusztig polynomials, one may use the following formulas:
\begin{theorem} \label{KLRecursiveFormulas}
Let $\lambda$ be antidominant.
Kazhdan-Lusztig polynomials may be defined by $P_{x,x}(q) = 1$ for all 
$x \in W_\lambda$ and $P_{x,y}(q) = 0$ for $x, y \in W_\lambda$ and $x \not 
\leq y$ along with the recursive formulas where $s$ is a simple reflection:
\begin{itemize}
\item[a)] $P_{x,y}(q) = P_{xs, y}(q)$ if $ys < y$ and $x, xs \leq y$
\item[a')] $P_{x,y}(q) = P_{sx, y}(q)$ if $sy < y$ and $x, sx \leq y$
\item[b)] If $y < ys$, then:
$$q^c P_{xs,y}(q) + q^{1-c} P_{x,y}(q) = \sum_{zs < z} a_{z,y,1}
q^{\frac{\ell(y)-\ell(z)+1}{2}} P_{x,z}(q) + P_{x,ys}(q)$$
where $c=1$ if $xs > x$ and $c=0$ if $xs < x$.
\end{itemize}
\end{theorem}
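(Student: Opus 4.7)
The plan is to derive the three recursions from the standard Hecke-algebra construction of Kazhdan-Lusztig polynomials following \cite{KL}. Let $\mathcal{H}$ denote the Hecke algebra of $W_\lambda$ over $\bbZ[q^{1/2}, q^{-1/2}]$ with $T$-basis $\{T_w\}_{w \in W_\lambda}$ satisfying $T_x T_s = T_{xs}$ when $xs > x$ and $(T_s+1)(T_s-q) = 0$. The Kazhdan-Lusztig basis $\{C'_y\}$ is uniquely characterized by bar-invariance together with the triangular expansion
$$C'_y = q^{-\ell(y)/2} \sum_{x \leq y} P_{x,y}(q)\,T_x$$
subject to $P_{y,y} = 1$ and $\deg P_{x,y} \leq (\ell(y) - \ell(x) - 1)/2$ for $x < y$. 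The initial conditions $P_{x,x} = 1$ and $P_{x,y} = 0$ for $x \not\leq y$ are built into this construction, so the task reduces to establishing the three recursions.

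For (a), assume $ys < y$. The key identity is $C'_y T_s = q C'_y$, which holds because $C'_y$ lies in the $q$-eigenspace of right multiplication by $T_s$ (equivalently, $C'_y C'_s = (q^{1/2} + q^{-1/2}) C'_y$ by uniqueness). I would expand both sides in the $T$-basis using $T_x T_s = T_{xs}$ when $xs > x$ and $T_x T_s = (q-1)T_x + qT_{xs}$ when $xs < x$, and then match the coefficient of each $T_x$. A short calculation produces $P_{xs,y}(q) = P_{x,y}(q)$ whenever $x, xs \leq y$, in both subcases $xs > x$ and $xs < x$. Formula (a') follows from the symmetric argument using left multiplication by $T_s$.

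For (b), assume $y < ys$, and compute $C'_y \cdot C'_s$ in two different ways. Bar-invariance and the triangularity of $C'_y C'_s$ force the decomposition
$$C'_y C'_s = C'_{ys} + \sum_{\substack{z < y \\ zs < z}} a_{z,y,1}\, C'_z$$
by uniqueness of the Kazhdan-Lusztig basis, where $a_{z,y,1}$ is the top-degree coefficient of $P_{z,y}$ (nonzero only when the relevant parity condition holds). On the other hand, writing $C'_s = q^{-1/2}(T_s + 1)$ and expanding $C'_y(T_s + 1)$ directly, the coefficient of each $T_x$ splits into two subcases depending on whether $xs > x$ or $xs < x$, and after clearing the $q^{-(\ell(y)+1)/2}$ prefactor takes the form $q^c P_{xs,y}(q) + q^{1-c} P_{x,y}(q)$ with $c = 1$ in the first case and $c = 0$ in the second. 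The coefficient of $T_x$ in the $C'$-basis decomposition, scaled by the same normalization, equals $P_{x,ys}(q) + \sum_{zs<z} a_{z,y,1}\, q^{(\ell(y)-\ell(z)+1)/2}\, P_{x,z}(q)$, and equating the two expressions yields (b).

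The main obstacle will be the careful bookkeeping of the half-integer $q$-normalizations propagating through products of $C'$-basis elements: verifying that each $a_{z,y,1}\, q^{(\ell(y)-\ell(z)+1)/2}$ actually lies in $\bbZ[q]$, using the degree bound on $P_{x,y}$ to rule out any $C'_z$-terms with $zs > z$ in the decomposition of $C'_y C'_s$, and confirming that the support condition $P_{x,z} \neq 0 \Rightarrow x \leq z$ aligns with the range $z < y$ in the sum so that no extraneous terms survive.
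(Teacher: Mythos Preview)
The paper does not actually supply a proof of this theorem: it is stated as a known background result (``To compute Kazhdan--Lusztig polynomials, one may use the following formulas'') and then used in Section~4. So there is no proof in the paper to compare your proposal against.

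That said, your Hecke-algebra derivation is the standard argument (essentially the original one from \cite{KL}) and is correct. The identity $C'_yT_s=qC'_y$ for $ys<y$ does yield (a) after matching $T_w$-coefficients in both parity cases, and the computation of $C'_yC'_s$ for $y<ys$ in two ways gives exactly (b) with $a_{z,y,1}=\mu(z,y)$, the top coefficient of $P_{z,y}$. Your bookkeeping concerns are minor: the half-integer power $q^{(\ell(y)-\ell(z)+1)/2}$ is integral precisely when $a_{z,y,1}\neq 0$, and the degree bound on $P_{z,y}$ together with bar-invariance is what forces only $C'_z$ with $zs<z$ to appear in the decomposition of $C'_yC'_s$. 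One small point: in your statement of the decomposition you restrict to $z<y$, whereas the theorem sums over all $z$ with $zs<z$; these agree because $a_{z,y,1}=0$ unless $z<y$, but it is worth saying so explicitly.
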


\section{Coherent Continuation Functors}
We begin by recalling the definition of Category $\cO$ and related concepts 
discussed in \cite{BGG}.
\begin{definition}
Category $\cO$ is the category of $\g$-modules $V$ satisfying:
\begin{enumerate}
\item $V = \oplus_{\mu \in \h^*} V_\mu$
\item $V$ is finitely-generated over $U( \g )$
\item $V$ is $\mfn$-finite:  $\dim U(\mfn) v < \infty$ for every $v \in V$.
\end{enumerate}
\end{definition}
Verma modules and irreducible highest weight modules form two different bases 
of the Grothendieck group of Category $\cO$.

If $z \in Z(\g)$, the centre of the universal enveloping algebra, then the 
centre acts by scalars on Verma modules, so there
is a map $\chi_\mu : Z(\g) \to \bbC$ such that
$z \cdot v = \chi_\mu(z) v$ for every $z \in Z(\g)$ and every $v \in M( 
\mu )$.  Specifically, consider the decomposition $U(\g) = U(\h) \oplus 
( U(\g) \mfn  + \mfn^- U(\g) )$ and let $pr : U(\g) \to U(\h)$ be projection 
with respect to that direct sum.  Then:
$$\chi_\mu( z ) = \mu( pr(z)) \qquad \forall \, z \in Z(\g).$$
$\chi_\mu$ is called a central character.  $\chi_\mu = \chi_\nu$ if and only 
if $\nu = w \mu$ for some element $w$ of the Weyl group.

\begin{definition}
Category $\cO$ may be decomposed by central character into what are called 
blocks:
$$\cO_\mu := \{ V \in \cO : \exists \, N \text{ such that } (z-\chi_\mu(z))^N 
\text{ annihilates } V \, \forall \, z \in Z(\g) \}.$$
\end{definition}
For example, $M( x \lambda )$ and $L( x \lambda )$ belong to $\cO_\lambda$ 
for all $x \in W$.  
\begin{definition}
Decomposing category $\cO$ by infinitesimal character:
$$ \cO = \bigoplus_{\chi_\mu} \cO_\mu.$$
By $Pr_\mu$, we mean projection onto $\cO_\mu$ under the above direct sum.  
Decomposition under the above direct sum is called primary decomposition.  
\end{definition}
It is known that primary decomposition is an orthogonal decomposition.

To define coherent continuation functors, we need Jantzen's translation 
functors which allow us to translate modules from one block of Category $\cO$ 
to another.
\begin{definition}  Choosing $F(\mu)$ the 
finite-dimensional representation of extremal integral weight $\mu$, then
\begin{eqnarray*}
T^{\lambda + \mu}_\lambda : \cO_\lambda &\to& \cO_{\lambda + \mu} \\
V &\mapsto & Pr_{\lambda + \mu} ( V \otimes F(\mu))
\end{eqnarray*}
is Jantzen's translation functor from $\cO_\lambda \to \cO_\mu$.
By projecting first onto $\cO_\lambda$, the translation functor may be 
applied to all of Category $\cO$:
\begin{eqnarray*}
T^{\lambda + \mu}_\lambda : \cO &\to& \cO_{\lambda + \mu} \\
V &\mapsto & Pr_{\lambda + \mu} ( Pr_\lambda(V) \otimes F(\mu)).
\end{eqnarray*}
\end{definition}

Now we recall the definition of coherent continuation functors:
\begin{definition}
Let $\lambda \in \h^*$ be antidominant regular and let $s = s_\alpha \in 
W_\lambda$ be a simple reflection.  There exists an integral weight 
$\nu_\alpha$ such that $\lambda - \nu_\alpha$ is antidominant and the only 
simple root $\beta$ for which $(\lambda - \nu_\alpha,\beta) = 0$ is 
$\beta = \alpha$.
Then:
\begin{enumerate}
\item $T_\lambda^{\lambda - \nu_\alpha}$ is translation to the $\alpha$-wall,
\item $T_{\lambda - \nu_\alpha}^\lambda$ is translation from the $\alpha$-wall,
and
\item $\theta_\alpha = T_{\lambda-\nu_\alpha}^\lambda \circ T_\lambda^{\lambda
- \nu_\alpha}$ is the coherent continuation functor across the 
$\alpha$-wall or the reflection functor across the $\alpha$-wall.
\end{enumerate}
\end{definition}

We recall the following facts about the coherent-continuation functor:
\begin{theorem} \label{CCTheorem}
Let $\lambda$ be antidominant regular, $\alpha \in 
\Pi_\lambda$, and let $s = s_\alpha$.  If $x \in W_\lambda$, then:
\begin{enumerate}
\item \label{CCCoherent}
$\theta_\alpha M( x \lambda ) = \theta_\alpha M( xs \lambda )$
\item If $x < xs$, there is a short exact sequence
$$ 0 \to M( xs\lambda ) \to \theta_\alpha M( x \lambda ) \to M( x \lambda )
\to 0.$$
\item If $x > xs$, then $\theta_\alpha L( x \alpha ) = 0$.
\item If $x < xs$, then $\theta_\alpha L( x \alpha ) \neq 0$ and it has a 
unique simple quotient and a unique simple submodule, both isomorphic to 
$L( x \lambda )$.
\item If $x < xs$, then there is a chain complex 
$$ 0 \to L( x \lambda ) \hookrightarrow \theta_\alpha L( x \lambda ) 
\twoheadrightarrow L( x \lambda ) \to 0$$
whose cohomology is denoted by $U_\alpha L( x \lambda )$.  Letting 
$a_{w_\lambda^0 x, w_\lambda^0 y, j}$ denote the coefficient of $q^{\frac{
\ell(x) - \ell(y)-j}{2}}$ in $P_{w_\lambda^0 x, w_\lambda^0 y}(q)$, 
$$ch\, U_\alpha L(x \lambda ) = ch\, L( xs \lambda ) + \sum_{ y \in W_\lambda | 
y > ys} a_{w_\lambda^0 x, w_\lambda^0 y, 1} \, ch\, L( y \lambda ).$$
\end{enumerate}
\end{theorem}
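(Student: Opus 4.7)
The plan is to establish the five parts in order, using the tensor product construction of translation functors together with the Verma flag theorem for (1) and (2), self-adjointness of $\theta_\alpha$ for (3) and (4), and Jantzen's Conjecture for the delicate character formula in (5).

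For (1) and (2), I would invoke the classical fact that $M(\mu) \otimes F$ admits a filtration whose subquotients are Verma modules $M(\mu + \nu)$ as $\nu$ ranges over the weights of $F$ with multiplicity. Applied to $M(x\lambda) \otimes F(-\nu_\alpha)$ and then projected onto $\cO_{\lambda - \nu_\alpha}$, only those $\nu$ with $x\lambda + \nu$ in the $W$-orbit of $\lambda - \nu_\alpha$ survive. Since the stabilizer of $\lambda - \nu_\alpha$ in $W_\lambda$ is $\{1, s\}$, the translations of $x\lambda$ and $xs\lambda$ both collapse to a single Verma module $M(x(\lambda - \nu_\alpha))$, proving (1). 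Translating back via $F(\nu_\alpha)$ produces a two-step Verma flag on $\theta_\alpha M(x\lambda)$; placing $M(x\lambda)$ as the quotient and $M(xs\lambda)$ as the submodule, consistent with the ordering of highest weights, yields the short exact sequence of (2).

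For (3) and (4), I would combine (2) with the self-adjointness of $\theta_\alpha$, which follows from the bi-adjointness of the underlying translation functors. Applying $\Hom_\g(\cdot, L(y\lambda))$ to (2) and using the standard formula $\Hom_\g(M(z\lambda), L(y\lambda)) = \delta_{y,z}\,\bbC$, I can read off the head and socle of $\theta_\alpha M(w\lambda)$ for $w \in \{x, xs\}$. The corresponding properties of $\theta_\alpha L(x\lambda)$ then follow because every nontrivial map out of $L(x\lambda)$ extends from a map out of $M(x\lambda)$, and dually. When $xs < x$, the adjunction $\Hom_\g(L(y\lambda), \theta_\alpha L(x\lambda)) \cong \Hom_\g(\theta_\alpha L(y\lambda), L(x\lambda))$ shows that no simple module can embed into $\theta_\alpha L(x\lambda)$, forcing $\theta_\alpha L(x\lambda) = 0$.

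Part (5) is the main obstacle and requires Jantzen's Conjecture in an essential way. From (4), the chain complex gives $ch\, U_\alpha L(x\lambda) = ch\, \theta_\alpha L(x\lambda) - 2\, ch\, L(x\lambda)$. To compute $ch\, \theta_\alpha L(x\lambda)$, I would expand $ch\, L(x\lambda)$ via the Kazhdan-Lusztig inversion formula, apply (2) to each Verma factor, and re-expand in irreducibles using the multiplicity formula; the resulting combinatorial identity should collapse to the stated expression, with the surviving coefficients being precisely the first-level Jantzen coefficients $a_{w_\lambda^0 x, w_\lambda^0 y, 1}$ for $y$ with $ys < y$. A more conceptual route, following Gabber-Joseph, is to track the Jantzen filtration under coherent continuation directly: $U_\alpha L(x\lambda)$ may be identified with the first filtration piece of $\theta_\alpha M(x\lambda)$ modulo the two natural copies of $L(x\lambda)$, and Jantzen's Conjecture then reads off the multiplicities. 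The most delicate step in either approach is matching filtration degrees to Kazhdan-Lusztig coefficients while accounting for the cancellations imposed by the short exact sequence in (2).
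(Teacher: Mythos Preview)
The paper does not actually prove Theorem~\ref{CCTheorem}; it is stated as a ``recall'' of known facts about coherent continuation functors, with no proof given. These results are classical (due to Jantzen, Vogan, Gabber--Joseph, and others), and the paper invokes them as background in order to prove the Kazhdan--Lusztig inversion formula in Section~4. So there is no proof in the paper to compare your proposal against.

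That said, a few remarks on your sketch as an independent proof. Your treatment of (1), (2), and (4) follows the standard line and is fine. For (3), your adjunction argument is close but not quite closed: to conclude $\Hom_\g(\theta_\alpha L(y\lambda), L(x\lambda)) = 0$ for \emph{every} $y$, the case $y > ys$ requires $\theta_\alpha L(y\lambda) = 0$, which is exactly what you are proving. The usual fix is to argue directly that translation to the wall $T_\lambda^{\lambda-\nu_\alpha}$ annihilates $L(x\lambda)$ when $x > xs$ (e.g.\ by showing that the only possible simple quotient of $T_\lambda^{\lambda-\nu_\alpha} L(x\lambda)$, namely $L(x(\lambda-\nu_\alpha))$, does not actually occur, via adjunction with translation \emph{from} the wall).

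For (5), your first route---expand $ch\,L(x\lambda)$ via the Kazhdan--Lusztig inversion formula, apply $\theta_\alpha$, and re-expand---is combinatorially sound in the literature at large, but note that within this paper it would be circular: the inversion formula is the main theorem of Section~4 and is proved \emph{using} part~(5) of Theorem~\ref{CCTheorem}. Your second route, via Gabber--Joseph and the Jantzen filtration, is the historically correct one and avoids this circularity.
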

These are the formulas we will use to prove the character multiplicity 
inversion formula by induction.

\section{Kazhdan-Lusztig Polynomials:  Character Multiplicity Inversion}
The goal of this section is to prove Kazhdan-Lusztig's character multiplicity 
inversion formula by induction:
\begin{theorem}(\cite{KL})
Let $\lambda \in h^*$ be antidominant and let $x \in W_\lambda$.  Then:
$$ch\, L( x \lambda ) = \sum_{y \leq x} (-1)^{\ell(x) - \ell(y)} P_{y,x}(1) 
\,ch\, M(y \lambda ).$$
\end{theorem}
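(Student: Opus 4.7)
The plan is to proceed by induction on $\ell(x)$. The base case $x = e$ is immediate: antidominance of $\lambda$ forces $M(\lambda) = L(\lambda)$, and the sum on the right collapses to the single term $P_{e,e}(1)\, ch\, M(\lambda) = ch\, M(\lambda)$.

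For the inductive step, choose a simple reflection $s = s_\alpha$ with $\alpha \in \Pi_\lambda$ and $xs < x$, and set $z := xs$. The strategy is to apply the coherent continuation functor $\theta_\alpha$ to the inductive formula for $[L(z\lambda)]$ and compare with the decomposition of $[\theta_\alpha L(z\lambda)]$ obtained from Theorem~\ref{CCTheorem}(5), which isolates $[L(x\lambda)]$ as the new summand. Parts (1)--(2) of Theorem~\ref{CCTheorem} yield, in the Grothendieck group of $\cO_\lambda$ and with no case distinction on the relative lengths of $y$ and $ys$ (when $ys < y$, first apply (1) to interchange $y$ and $ys$, then (2)),
$$[\theta_\alpha M(y\lambda)] \;=\; [M(y\lambda)] + [M(ys\lambda)],$$
so applying $\theta_\alpha$ to the inductive hypothesis for $[L(z\lambda)]$ gives
$$[\theta_\alpha L(z\lambda)] \;=\; \sum_{y \leq z}(-1)^{\ell(z)-\ell(y)} P_{y,z}(1)\bigl([M(y\lambda)] + [M(ys\lambda)]\bigr).$$
On the other hand, since $z < zs = x$, Theorem~\ref{CCTheorem}(5) gives
$$[\theta_\alpha L(z\lambda)] \;=\; 2[L(z\lambda)] + [L(x\lambda)] + \sum_{\substack{y' > y's \\ y' \leq z}} a_{w_\lambda^0 z,\, w_\lambda^0 y',\, 1}\,[L(y'\lambda)],$$
where the bound $y' \leq z$ (equivalently $y' < x$) is forced by the Bruhat condition required for the coefficient to be nonzero.

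Equating these two expressions, solving for $[L(x\lambda)]$, and then substituting the inductive hypothesis for $[L(z\lambda)]$ and for every $[L(y'\lambda)]$ appearing on the right (valid because each such index has length strictly less than $\ell(x)$) writes $[L(x\lambda)]$ as an explicit $\bbZ$-linear combination of $[M(y\lambda)]$'s with $y \leq x$.

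The main obstacle is matching the resulting coefficient of $[M(y\lambda)]$ with the claimed value $(-1)^{\ell(x)-\ell(y)} P_{y,x}(1)$. After relabeling $y \leftrightarrow ys$ in the $[M(ys\lambda)]$ contributions and using $\ell(x) = \ell(z)+1$ together with the opposite parities of $\ell(y)$ and $\ell(ys)$ to track signs, the identity reduces, modulo boundary cases handled by Theorem~\ref{KLRecursiveFormulas}(a) and (a'), to
$$P_{y,x}(1) \;=\; P_{y,z}(1) + P_{ys,z}(1) - \sum_{t > ts} a_{t,z,1}\, P_{y,t}(1).$$
This is exactly Theorem~\ref{KLRecursiveFormulas}(b) with $v = z$, $vs = x$, evaluated at $q = 1$, where the value $c \in \{0,1\}$ becomes irrelevant. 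The final subtle point is reconciling the $a$-coefficients in the two theorem statements: the $a_{w_\lambda^0 z,\, w_\lambda^0 y',\, 1}$ of Theorem~\ref{CCTheorem}(5) is identified with the $a_{y',z,1}$ of Theorem~\ref{KLRecursiveFormulas}(b) via the standard Kazhdan-Lusztig symmetry $P_{u,v}(q) = P_{w_\lambda^0 v,\, w_\lambda^0 u}(q)$. With this identification, the coefficient comparison succeeds term by term and the induction closes.
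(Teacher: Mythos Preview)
Your approach is essentially the paper's: induct on $\ell(x)$, apply $\theta_\alpha$ to the inductive expression for $[L(xs\,\lambda)]$, expand both sides via Theorem~\ref{CCTheorem}, and reduce the coefficient comparison for each $[M(w\lambda)]$ to the recursion of Theorem~\ref{KLRecursiveFormulas}(b) evaluated at $q=1$; the paper carries this out via an explicit three-case split on which of $w,\,ws$ lie below $xs$, which is exactly the ``boundary cases'' you allude to.

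One slip to flag: the polynomial identity $P_{u,v}(q) = P_{w_\lambda^0 v,\, w_\lambda^0 u}(q)$ you invoke at the end is \emph{not} valid in general.  What you actually need, and what is true, is only the equality of top coefficients $a_{u,v,1} = a_{w_\lambda^0 v,\, w_\lambda^0 u,\, 1}$ (equivalently $\mu(u,v) = \mu(w_\lambda^0 v, w_\lambda^0 u)$); the paper invokes this as a known fact (citing \cite{H}, p.~165) rather than any full polynomial symmetry.
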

\begin{proof}
We prove this by induction on $x$.  When $x=1$, $M( \lambda ) = L( \lambda )$ 
while $P_{1,1}(q) = 1$, so the formula holds.

We suppose now by induction that for all $z < x$, 
$$ch\, L( z \lambda ) = \sum_{w \leq z} (-1)^{\ell(z) - \ell(w)} \, P_{w,z}(1)
\, ch \, M( w \lambda ).$$
In particular, for some $s = s_\alpha$ simple, $x > xs$.  By induction, 
$$ ch \, L( xs \lambda ) = \sum_{ y \leq xs} (-1)^{\ell(xs) - \ell(y)}
\, P_{y,xs}(1) \, ch \, M(y \lambda).$$
Following an idea of Gabber and Joseph, who showed how to build recursive 
formulas for computing Kazhdan-Lusztig polynomials using coherent continuation 
functors assuming that Jantzen's Conjecture holds,
we apply $\theta_\alpha$ to both sides of this equation and get:
$$ch \, \theta_\alpha L( xs \lambda ) = \sum_{y \leq xs} (-1)^{\ell(xs) - 
\ell(y)} \, P_{y,xs}(1) \, ch \, \theta_\alpha M( y \lambda ).$$ 
The equations of Theorem \ref{CCTheorem} now imply:
$$2 \, ch \, L( xs \lambda ) + ch \, L( x \lambda )  + 
\sum_{z > zs} a_{w_\lambda^0 xs, w_\lambda^0 z, 1} \, ch L( z \lambda )
= \sum_{y \leq xs} (-1)^{\ell(xs) - \ell(y)} \, P_{y,xs}(1) \left( 
ch\, M( y \lambda) + ch \, M( ys \lambda ) \right).$$
Applying the induction hypothesis to $ch \, L( xs \lambda )$ and $ch \, 
L( z \lambda)$ and rearranging, we see that
\begin{eqnarray*}
ch \, L( x \lambda ) &=&  \sum_{y \leq xs} (-1)^{\ell(xs) - \ell(y)} \, 
P_{y,xs}(1) \left( - ch \, M(y\lambda) + ch\, M( ys \lambda ) \right) \\
&&- \sum_{z > zs} a_{w_\lambda^0 xs, w_\lambda^0 z, 1} \sum_{w \leq z}
(-1)^{\ell(z) - \ell(w)} \,P_{w,z}(1) \, ch \, M( w \lambda )
\end{eqnarray*}
To prove the theorem, we study the coefficient in front of a fixed 
$ch \, M( w \lambda )$ where $w \leq x$.  Let $c( M(w \lambda))$ denote this 
coefficient.

Since $x > xs$, by Property Z, $x \geq w, ws$.

{\bf Case 1):}  Both $w, ws \leq xs$. \\
The coefficient of $M( w \lambda )$ is:
\begin{eqnarray*}
c( M(w \lambda)) &=& (-1)^{\ell(xs) - \ell(w)} P_{w,xs}(1) (-1) + 
(-1)^{\ell(xs) - \ell(ws)} P_{ws, xs}(1) \\
&&- \sum_{z > zs, z \geq w}
a_{w_\lambda^0 xs, w_\lambda^0 z, 1} (-1)^{\ell(z) - \ell(w)} P_{w,z}(1) \\
&=& (-1)^{\ell(x)-\ell(w)} \left( P_{w,xs}(1) + P_{ws, xs}(1) \right) - 
\sum_{z > zs, z \geq w} a_{w_\lambda^0 xs, w_\lambda^0 z, 1} (-1)^{\ell(z) -
\ell(w)} P_{w,z}(1).
\end{eqnarray*} 
Now $a_{w_\lambda^0 xs, w_\lambda^0 z, 1}$ is the coefficient of 
$q^{\frac{\ell(xs) - \ell(z) - 1}{2}}$ in $P_{w_\lambda^0 xs, w_\lambda^0 
z}$, and thus $\ell(x)$ and $\ell(z)$ have the same parity if $a_{w_\lambda^0
xs, w_\lambda^0 z, 1} \neq 0$.  We may replace $(-1)^{\ell(z) - \ell(w)}$ 
with $(-1)^{\ell(x) - \ell(w)}$.  Furthermore, it is well-known that 
$a_{w_\lambda^0 xs, w_\lambda^0 z, 1} = a_{z, xs, 1}$ (see, for example, 
\cite{H}, p. 165).
Thus:
\begin{eqnarray*}
c( M( w \lambda ) ) &=& (-1)^{\ell(x) - \ell(w)} \left( P_{w,xs}(1) + 
P_{ws, xs}(1) - \sum_{z > zs, z \geq w} a_{z,xs,1} P_{w,z}(1) \right) \\
&=& (-1)^{\ell(x)-\ell(w)}  P_{w,x}(1)
\end{eqnarray*}
by formula b) of Theorem \ref{KLRecursiveFormulas}.

{\bf Case 2):}  $x \geq w$ but $xs \not \geq w$: \\
Recall by Property Z that both $w, ws \leq x$.  Also, we must have $w > ws$ 
and $xs \geq ws$
by Property Z.  The coefficient of $M( w \lambda )$ is:
\begin{eqnarray*}
c( M(w\lambda)) &=& (-1)^{\ell(xs) - \ell(ws)} P_{ws, xs}(1) - \sum_{z > zs, 
z \geq w} a_{w_\lambda^0 xs, w_\lambda^0 z, 1} (-1)^{\ell(z) - \ell(w)} 
P_{w,z}(1) \\
&=& (-1)^{\ell(x) - \ell(w)} \left( P_{ws,xs}(1) - \sum_{z > zs, z \geq w}
a_{w_\lambda^0 xs, w_\lambda^0 z, 1} P_{w,z}(1) \right ) \\
&=& (-1)^{\ell(x) - \ell(w)} \left( P_{ws,xs}(1) \right)
\end{eqnarray*}
since $a_{w_\lambda^0 xs, w_\lambda^0 z, 1} \neq 0$ implies that $z \leq xs$ 
while $P_{w,z} \neq 0$ implies that $w \leq z$.  There are no $z$ so that 
$w \leq z \leq xs$, so the sum is zero.

It is known that the right side is now $(-1)^{\ell(x) - \ell(w)} P_{w,x}(1)$ 
(see, for example, \cite{dC} formula 4.1 c)).

{\bf Case 3):}  $xs \geq w$ but $xs \not \geq ws$: \\
In this case, $w < ws \leq x$ by Property Z.  The coefficient of $M( w\lambda)$
is:
\begin{eqnarray*}
c( M(w\lambda)) &=& -(-1))^{\ell(xs) - \ell(w)} P_{w,xs}(1) - \sum_{z > zs,
z \geq w} (-1)^{\ell(z) - \ell (w)} a_{w_\lambda^0 xs, w_\lambda^0 z,1} 
P_{w,z}(1) \\
&=& (-1)^{\ell(x)-\ell(w)}\left( P_{w,xs}(1) + P_{ws,xs}(1) - \sum_{z>zs, z 
\geq w} a_{z,xs,1} P_{w,z}(1) \right) \quad \text{since } P_{xs,ws} = 0 \\
&=& (-1)^{\ell(x)-\ell(w)} P_{w,x}(1).
\end{eqnarray*}

In all other cases, the coefficient is $0$.

We see that in all cases,
$$ch \, L( x\lambda ) = \sum_{y \leq x} (-1)^{\ell(x) - \ell(y)} ch \, M( y 
\lambda ),$$
proving Kazhdan-Lusztig's character multiplicity inversion formula by 
induction.
\end{proof}

\section{Signed Kazhdan-Lusztig Polynomials and Jantzen's Conjecture}
We specialize the setup from the first half of the paper for the second half 
of this paper:
\begin{notation}
\begin{itemize}
\item[-] $\g_0$ is an equal rank real semisimple Lie algebra 
\item[-] $\theta$ is a Cartan involution of $\g_0$
\item[-] $\g_0 = \mfk_0 \oplus \mfp_0$ is the corresponding Cartan decomposition
\item[-] $\h_0$ is a $\theta$-stable Cartan subalgebra of $\g_0$
\item[-] drop the subscript 0 to denote complexification
\end{itemize}
\end{notation}

A Hermitian form $\left< \cdot, \cdot \right>$ on a $\g$-module $V$ is 
invariant if 
$$\left< X \cdot u, v \right> + \left< u, \bar{X} \cdot v \right> = 0$$
for all $X \in \g$, where complex conjugation is with respect to the real 
form $\g_0$ of $\g$.

In our setup, the Verma module $M( \lambda )$ accepts a non-degenerate 
invariant Hermitian form if and only if $\theta( \mfb ) = \mfb$ and $\lambda$
is imaginary (\cite{Y3}, p. 641).  Since $\g$ is equal rank, therefore $\h$ is 
compact and all of the roots are imaginary.  The form is unique up to a real 
scalar.
If it is normalized so that $\left< v_{\lambda-\rho}, v_{\lambda-\rho} 
\right> =1$, then it is called the Shapovalov (Hermitian) form and denoted 
by $\left< \cdot, \cdot \right>_\lambda$.

Since the Chevalley basis vectors satisfy $-\bar{X}_\alpha = Y_\alpha$ if 
$\alpha$ is compact while $-\bar{X}_\alpha = -Y_\alpha$ if $\alpha$ is 
non-compact (see \cite{Y3}, 5.2.17), therefore $\left< \cdot, \cdot 
\right>_\lambda$ and $( \cdot, \cdot )_\lambda$, the invariant Shapovalov 
Hermitian form and the contravariant Shapovalov 
bilinear form, are related.  The weight space decomposition is an orthogonal 
decomposition into finite dimensional subspaces (see \cite{Y3}, p. 643) and 
just as in section 
\ref{KLJantzen} we can discuss the signature of the Shapovalov form.  We 
can also discuss the Jantzen filtration on that Verma module 
by considering an analytic path of imaginary highest weights.  The Jantzen 
filtration of a Verma module arising from invariant Hermitian forms and the 
Jantzen filtration of a Verma module arising from contravariant forms are the 
same.  We denote the levels of the filtration with 
$M(\lambda)^{\left< j \right>}$ or with 
$M(\lambda)^{(j)}$ interchangeably.  Furthermore, the Shapovalov form 
descends to an invariant Hermitian form on $L(\lambda)$.  
We can also discuss the signature for the highest weight module $L( \lambda )$.
The form is non-degenerate, and signatures for each of the weight spaces can be 
stored in a signature character:

\begin{definition}
The signature character of a non-degenerate invariant Hermitian form 
$\left< \cdot, \cdot \right>$ on a  
highest weight module $V$ whose weights are $\Delta(V)$ is
$$ch_s \left< \cdot, \cdot \right> = ch_s V = \sum_{\mu \in \Delta(V)} (p(\mu) 
- q(\mu)) e^\mu$$
where the signature on $V_\mu$ is $(p(\mu), q(\mu))$.
\end{definition}

The general approach to computing signature characters has been as follows.  
The Shapovalov form on a Verma module is non-degenerate if and only if the 
Verma module is irreducible.  $M( \lambda )$ is reducible if and only if 
$(\lambda, \alpha^\vee) \in \bbZ^+$ for some positive root $\alpha$, so 
$M( \lambda )$ is irreducible away from various affine hyperplanes.   Within 
any region bounded by these so called reducibility hyperplanes, the Shapovalov 
form cannot become degenerate, hence the signature is constant.  The 
antidominant Weyl chamber does not contain any reducibility hyperplanes.  
Within the reducibility hyperplane-free region containing the antidominant 
Weyl chamber, an asymptotic argument gives the signature character 
(see \cite{W}).  In other regions, an induction argument and combinatorial 
analysis to determine how signatures change as you cross reducibility 
hyperplanes to travel from one region to another give the signature 
character for other irreducible Verma modules (see \cite{Y}).  If $M( \lambda )$
is reducible, we wish instead to compute the signature of $L( \lambda )$.  Such
formulas were determined in \cite{Y} and use so-called signed Kazhdan-Lusztig 
polynomials.  We review the definition of signed Kazhdan-Lusztig polynomials.

Let $\lambda$ be antidominant and imaginary and let $x \in W_\lambda$.  The 
$j^{\text{th}}$ level of the Jantzen filtration of $M(x\lambda)$ is semisimple 
and equal to the direct sum of modules of the form $L(y\lambda)$ where $y \in 
W_\lambda$  and $y \leq x$.  While Kazhdan-Lusztig polynomials record $j^{
\text{th}}$ level multiplicities in their coefficients, signed Kazhdan-Lusztig 
polynomials were defined to record signatures.  First recall that the 
Jantzen filtration naturally provides forms on each filtration level:
\begin{definition} 
Let $\lambda$ be antidominant, let $x \in W_\lambda$, and let $\delta \in \h^*$
be regular and imaginary.  There is 
an invariant Hermitian form on $M( x \lambda)^{\left< j \right>}$:
$$ \left< u, v \right>_j 
:= \lim_{t \to 0^+} \frac{1}{t^j} \left< \gamma_u(t), \gamma_v(t) 
\right>_{x \lambda + \delta t} 
\qquad \forall \, u, v \in M(x\lambda).$$
This form descends to a non-degenerate form on $M( x \lambda )_j = 
M( x \lambda)^{\left< j \right>} / M(x \lambda)^{\left<j+1\right>}$
which we will also denote by $\left< \cdot, \cdot \right>_j$.  
\end{definition}
\begin{remark}
Note that the signatures of the forms $\left< \cdot, \cdot \right>_j$ 
depend on the direction $\delta$ of the filtration.
For any two directions belonging to the interior of the same Weyl chamber, 
there are no reducibility hyperplanes separating $x\lambda$ plus the direction 
vectors, so the signatures of $\left< \cdot, \cdot \right>_j$ are the same for 
those two choices of direction vector.  Without loss of generality, we take 
our direction to be some $w(-\rho)$ where $w \in W$.
\end{remark}

\begin{definition} 
Let $\lambda$ be antidominant, let $x \in W_\lambda$, and $w \in W$.  Let the 
Jantzen filtration on $M( x \lambda )$ arise from the analytic family of 
invariant Hermitian forms $\left< \cdot, \cdot \right>_{x\lambda + w(-\rho)t}$.
Recall the non-degenerate invariant Hermitian form $\left< \cdot, \cdot 
\right>_j$ defined on $M( x \lambda )_j$.
Let $y \in W_\lambda$.  Recall that $M(x\lambda)_j$ is a direct sum of 
irreducible highest weight modules of the form $L( y \lambda )$.  The 
constants $a^{\lambda,w}_{w_\lambda^0 x, w_\lambda^0 y, j}$ are defined 
to satisfy:
$$ch_s \left< \cdot, \cdot \right>_j = \sum_{y \leq x} a^{\lambda,w}_{
w_\lambda^0 x, w_\lambda^0 y, j} ch_s L( y \lambda )$$
where by $ch_s L( y \lambda )$ we mean the signature of the Shapovalov form.
Signed Kazhdan-Lusztig polynomials are defined by:
$$ P^{\lambda,w}_{w_\lambda^0 x, w_\lambda^0 y}(q) = \sum_{j \geq 0}
a^{\lambda,w}_{w_\lambda^0 x, w_\lambda^0 y,j} 
q^{\frac{\ell(x)-\ell(y)-j}{2}}.$$
\end{definition}

By Proposition 3.3 of \cite{V}, the signature character for small $t > 0$ is 
equal to the sum $\sum_j ch_s \left< \cdot, \cdot \right>_j$ of all 
the $j{\text{th}}$-level signature characters.
Therefore:
\begin{proposition}\label{SKLInterpretation}
 For small $t > 0$:
$$e^{w\rho t} \, ch_s \, M( x \lambda + w(-\rho)t ) = \sum_j ch_s \, 
\left< \cdot, \cdot \right>_j = \sum_{ y \leq x}
P^{\lambda,w}_{w_\lambda^0 x, w_\lambda^0 y}(1) ch_s L(y \lambda ).$$
\end{proposition}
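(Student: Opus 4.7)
The plan is to split the double equality into its two component equalities and handle each independently. The right-hand equality is a bookkeeping exercise from the definitions of $a^{\lambda,w}$ and of the polynomial $P^{\lambda,w}$, while the middle equality is the substantive step, invoking Vogan's Proposition 3.3 from \cite{V} combined with a weight-shift accounting for the direction vector $w(-\rho)$.

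For the right-hand equality, I would start from the defining relation $ch_s \left< \cdot, \cdot \right>_j = \sum_{y \leq x} a^{\lambda,w}_{w_\lambda^0 x, w_\lambda^0 y, j} \, ch_s \, L(y\lambda)$ and sum over $j \geq 0$. Because the Jantzen filtration of $M(x\lambda)$ has only finitely many non-zero levels the sum is finite, so I can interchange summations and collect the coefficient of each $ch_s \, L(y\lambda)$; the resulting inner sum $\sum_j a^{\lambda,w}_{w_\lambda^0 x, w_\lambda^0 y, j}$ is by definition $P^{\lambda,w}_{w_\lambda^0 x, w_\lambda^0 y}(1)$.

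For the middle equality, I would apply Vogan's Proposition 3.3 to the analytic family of invariant Hermitian forms $\left< \cdot, \cdot \right>_{x\lambda + w(-\rho)t}$, realized on each fixed finite-dimensional weight space via the canonical identification of each Verma module with $U(\mfn^-)$. The proposition states that for small $t > 0$, where the Shapovalov form is non-degenerate, the signature on each weight space equals the sum over $j$ of the signatures of the descended forms $\left< \cdot, \cdot \right>_j$ on the Jantzen quotients $M(x\lambda)_j$. Assembling these weight-by-weight signatures into a formal sum over weights yields the signature character identity, up to one subtlety: since $M(x\lambda + w(-\rho)t)$ has highest weight $x\lambda + w(-\rho)t - \rho$ while the quotient signatures live naturally on weights of $M(x\lambda)$, the formal characters differ by a factor of $e^{w(-\rho)t}$, and multiplying $ch_s \, M(x\lambda + w(-\rho)t)$ by $e^{w\rho t}$ exactly cancels this shift.

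The main, and fairly mild, obstacle is tracking this weight shift correctly and verifying that the chosen $t > 0$ is small enough that the deformed point $x\lambda + w(-\rho)t$ lies in a reducibility-hyperplane-free neighbourhood of $x\lambda$ along the direction $w(-\rho)$, so that Vogan's non-degeneracy hypothesis applies; the remark preceding the proposition essentially guarantees this is possible. Chaining the two equalities then yields the stated formula.
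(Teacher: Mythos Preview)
Your proposal is correct and matches the paper's approach exactly: the paper justifies the proposition in a single sentence preceding it, citing Proposition~3.3 of \cite{V} for the left equality and leaving the right equality implicit from the definitions of $a^{\lambda,w}_{w_\lambda^0 x, w_\lambda^0 y, j}$ and $P^{\lambda,w}_{w_\lambda^0 x, w_\lambda^0 y}$. Your write-up simply makes explicit the weight-shift accounting and the summation bookkeeping that the paper suppresses.
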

The left hand side is known by work of \cite{Y3} while we would like to 
obtain a formula for $ch_s \, L( x \lambda )$.
In \cite{Y}, this formula was inverted to give a formula for 
$ch_s L(x \lambda)$ as a sum of signature characters of irreducible Verma 
modules.  The formula was unwieldy and required two improvements:
\begin{enumerate}
\item  The coefficients in the linear combination required simplification. We 
prove in this paper that the coefficients are, up to a sign, signed 
Kazhdan-Lusztig polynomials evaluated at $1$.
\item Signed Kazhdan-Lusztig polynomials are equal to classical Kazhdan-Lusztig 
polynomials evaluated at $-q$ rather than $q$ and multiplied by a sign 
(the main theorem of \cite{Y2}).
\end{enumerate}

The following recursive formulas are satisfied by signed Kazdhan-Lusztig 
polynomials:
\begin{proposition} \label{SKLRecursive} 
(Theorem 4.6.10 of \cite{Y}, corrected in the proof of 
\cite{Y2} Theorem 4.6)
Let $\lambda \in \h^*$ be antidominant.
Signed Kazhdan-Lusztig polynomials may be computed using
$P_{x,x}^{\lambda,w_\lambda^0}(q) = 1$, $P^{\lambda,w_\lambda^0}_{x,y}(q) = 0$
if $x \not \leq y$, and the following recursive formulas where $s = s_\alpha$
where $\alpha \in \Pi_\lambda$:
\begin{itemize}
\item[a)]  If $ys > y$ and $xs > x \geq y$ then:
$$P^{\lambda,w_\lambda^0}_{w_\lambda^0 x, w_\lambda^0 y}(q) =
(-1)^{\epsilon( (\lambda,\alpha^\vee) x \alpha )} P^{\lambda,
w_\lambda^0}_{w_\lambda^0
xs, w_\lambda^0 y} (q)
= (-1)^{\epsilon((x\lambda, x \alpha^\vee ) x \alpha)} P^{\lambda,w_\lambda^0}_{
w_\lambda^0 xs, w_\lambda^0 y}(q).$$

\item[a')] If $sy > y$ and $sx > x \geq y$ then:
$$P^{\lambda,w_\lambda^0}_{w_\lambda^0 x, w_\lambda^0 y}(q) = (-1)^{\epsilon(
(x\lambda, \alpha^\vee) \alpha)} P^{\lambda,w_\lambda^0}_{w_\lambda^0 sx,
w_\lambda^0 y}(q).$$

\item[b)] If $y > ys$ and $x < xs$ and $x > y$ then:
\begin{eqnarray*}
 - (-1)^{\epsilon(( x \lambda, x \alpha^\vee ) x \alpha)} P^{\lambda,
w_\lambda^0}_{ w_\lambda^0 xs, w_\lambda^0 y}(q) + q P^{\lambda,
w_\lambda^0}_{w_\lambda^0 x,
w_\lambda^0 y} (q) &=& q \sum_{z \in W_\lambda | z < zs} a^{\lambda,
w_\lambda^0}_{w_\lambda^0 z, w_\lambda^0 y} q^{\frac{\ell(z) - \ell(y) - 1}{2}}
P^{\lambda,w_\lambda^0}_{ w_\lambda^0 x,
w_\lambda^0 z}(q) \\
&& -(-1)^{\epsilon( (ys \lambda, ys \alpha^\vee) ys\alpha)}
P^{\lambda,w_\lambda^0}_{w_\lambda^0 x, w_\lambda^0 ys}(q).
\end{eqnarray*}
\end{itemize}
\end{proposition}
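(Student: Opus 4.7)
The plan is to parallel the classical proof of Theorem \ref{KLRecursiveFormulas} but with careful bookkeeping of the signs coming from non-compact roots, captured by the $\bbZ_2$-grading $\epsilon$. The bridge between the polynomial statements and the module theory is provided by Proposition \ref{SKLInterpretation} together with its refinement at each Jantzen level, which is precisely what defines the coefficients $a^{\lambda,w_\lambda^0}_{\cdot,\cdot,j}$. The whole argument proceeds by applying coherent continuation functors to signature characters and comparing coefficients.

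First I would upgrade the coherent continuation identities of Theorem \ref{CCTheorem} from characters to signature characters. The short exact sequence $0 \to M(xs\lambda) \to \theta_\alpha M(x\lambda) \to M(x\lambda) \to 0$ (for $x < xs$) must be analyzed at the level of invariant Hermitian forms. The Shapovalov form transports through the translation functors $T^\lambda_{\lambda - \nu_\alpha}$ and $T^{\lambda - \nu_\alpha}_\lambda$ by integral shifts on weight spaces, and the induced sign corrections are controlled by the parities of the non-compact roots traversed. This is the source of the factor $(-1)^{\epsilon((x\lambda, x\alpha^\vee)x\alpha)}$ appearing throughout the statement, and the agreement of the two forms of the sign in part a) reduces to $W_\lambda$-invariance of $\epsilon$ under the relevant reflections.

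For parts a) and a'), I would apply $\theta_\alpha$ to the identity of Proposition \ref{SKLInterpretation} for $M(y\lambda + w_\lambda^0(-\rho)t)$ with $y < ys$, expand the left-hand side using the upgraded short exact sequence, and match coefficients of $ch_s\, L(z\lambda)$ on both sides; the asserted reflection symmetry between $P^{\lambda,w_\lambda^0}_{w_\lambda^0 x, w_\lambda^0 y}$ and $P^{\lambda,w_\lambda^0}_{w_\lambda^0 xs, w_\lambda^0 y}$, with its sign twist, should then fall out directly. For part b), I would follow the classical template but replace the character-level chain complex of Theorem \ref{CCTheorem}(5) by its signed-character upgrade: the cohomology $U_\alpha L(z\lambda)$ now contributes through the coefficients $a^{\lambda,w_\lambda^0}_{\cdot,\cdot,1}$ rather than through $a_{\cdot,\cdot,1}$, and the short exact sequence corrections must also be tracked with signs. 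Collecting terms and applying the sign conventions established in the first step yields the recursion.

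The main obstacle is the combinatorial accounting of signs. Whereas in the classical case the short exact sequence gives straightforward additivity of characters, in the signed setting the contribution of each piece depends on both the parity of its position in the Jantzen filtration and the parity of non-compact roots encountered in the translation. Verifying that the predicted $\epsilon$-signs are realized correctly by the transported Shapovalov form, and that the resulting recursion is internally consistent with the definition of $P^{\lambda,w_\lambda^0}_{\cdot,\cdot}$, is delicate; this is presumably where the correction in \cite{Y2} to the original statement in \cite{Y} was needed, and is the step requiring the most care.
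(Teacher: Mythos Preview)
The paper does not give its own proof of this proposition: the statement is explicitly attributed to Theorem 4.6.10 of \cite{Y} (with a correction noted in the proof of \cite{Y2}, Theorem 4.6), and no argument is supplied in the present paper. So there is nothing in the paper to compare your proposal against directly.

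That said, your sketch is consistent with the approach of the cited source and with the surrounding material in this paper. The signed coherent continuation identities you say you would first establish are exactly the content of Theorem~\ref{SKLCCTheorem} here, whose proof draws on the same technical lemmas from \cite{Y} (Lemmas 4.6.4, 4.6.7, 4.6.9 there) that control the $\epsilon$-signs under translation to and from the wall. Once those identities are in hand, deriving the recursion by comparing coefficients of $ch_s\,L(z\lambda)$ on both sides of the signed version of Proposition~\ref{SKLInterpretation} is precisely the Gabber--Joseph strategy carried over to the signed setting, which is how \cite{Y} obtains Theorem 4.6.10. Your identification of the sign bookkeeping as the delicate step, and of the correction in \cite{Y2} as occurring there, is accurate.

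One small caution: in your outline for parts a) and a') you propose applying $\theta_\alpha$ to the identity of Proposition~\ref{SKLInterpretation} and reading off the reflection symmetry. In practice this mixes contributions from many simples via Theorem~\ref{SKLCCTheorem}\eqref{SCoherentSimple}, so the cleaner route (and the one taken in \cite{Y}) is to work at the level of individual Jantzen layers and use the identity of Theorem~\ref{SKLCCTheorem}\eqref{SCoherent} directly on the Verma side; the recursion b) then comes from the simple-module side via \eqref{SCoherentSimple}. But this is a matter of organization rather than a gap.
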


\section{Gabber and Joseph's Enhanced Setting for Coherent Continuation}
Recall that Gabber and Joseph showed in \cite{GJ} how to use coherent 
continuation functors and Jantzen's Conjecture to prove recursive formulas for 
Kazhdan-Lusztig polynomials.  In order to study how coherent continuation 
affects signature characters and to study signed Kazhdan-Lusztig polynomials,
we need to use Gabber and Joseph's generalization of Category $\cO$.

Our setup for this section is as follows:
\begin{notation}
\begin{itemize}
\item[-] $A = \bbC[t]_{(t)}$, a local ring
\item[-] $\cB := \{ X_\alpha, Y_\alpha : \alpha \in \Delta^+(\g,\h)\} \cup 
	\{ H_\alpha : \alpha \in \Pi \}$ is the Chevalley basis for $\g$
\item[-] $\g_\bbZ = \text{span}_\bbZ \cB$ 
\item[-] $\h_\bbZ$, $\mfb_\bbZ$, $\mfn_\bbZ$ etc. are defined similarly
\item[-] $\g_A := \g_\bbZ \otimes_\bbZ A$
\item[-] $\h_A$, $\mfb_A$, $\mfn_A$ etc. are defined similarly
\item[-] Given $\lambda \in \h_A^*$, $A_\lambda$ is the one-dimensional 
	$\h_A$-module where $h \in \h$ acts by $\lambda(h)$
\item[-] Given $\lambda \in \h_A^*$, $M( \lambda )_A = U( \g_A ) 
	\otimes_{\mfb_A} A_{\lambda-\rho}$
\end{itemize}
\end{notation}

Gabber and Joseph make the following generalization of Category $\cO$:
\begin{definition}
Let $\lambda \in \h_A^*$ and let $C \subset \h_A^*$ be of the form 
$\lambda + \Lambda$, where $\Lambda$ is the integral weight lattice.  Then 
category $K_C$ is the subcategory of $U(\g_A)$ modules $V$ such that:
\begin{enumerate}
\item $V = \sum_{\mu \in C - \rho} V_\mu$
\item $V$ is $U( \mfn_A )$-finite (i.e. $\dim_A U(\mfn_A) \cdot v < \infty$ 
	for all $v \in V$)
\item $V$ is finitely generated over $U( \g_A)$
\end{enumerate}
\end{definition}

The Verma modules $M( \lambda )_A$ are objects in $K_C$.  Each $M( \lambda )_A$ 
has a unique maximal proper submodule.  In general, for every maximal ideal 
$m$ of $A$, there is  a unique maximal proper submodule of $M( \lambda )_A$ 
containing $m M(\lambda)_A$.  The corresponding simple quotient is denoted by 
$L( m, \lambda )$.  We have chosen $A$ to be a local ring, so the maximal 
ideal of $A$ is unique and the maximal proper submodule of $M( \lambda )_A$ 
is unique, and we may write $L( \lambda )_A$ for the corresponding simple 
quotient without ambiguity.  These $L( \lambda)_A$'s are the simple objects 
of category $K_C$.

\begin{notation}
Recall that we are working over the local ring $A = \bbC[t]_{(t)}$.
We denote by $\bar{\cdot}$ specialization at $t=0$.  That is, given any 
$A$-module $V$, $\bar{V} = V / tV$.  So, given
$\lambda + \delta t \in \h_A^*$, where $\lambda, \delta \in \h^*$:
\begin{itemize}
\item[-] $\overline{\lambda+\delta t} = \lambda$,
\item[-] $\overline{M(\lambda+\delta t)_A} = M( \lambda )$, and
\item[-] $\overline{L(\lambda+\delta t)_A} = L( \lambda )$.
\end{itemize}
\end{notation}

\begin{definition}
If $D \subset C$, then category $K_D$ is the subcategory of $K_C$ whose 
simple subquotients are $L( \lambda )_A$'s where $\lambda \in D$.
\end{definition}

\begin{definition}
In the context of category $K_C$, a subset $D \subset C$  is a block if 
there is some $\mu \in \h_A^*$ and $\delta \in \h^*$ such that
$D = W_{\bar{\mu}} \mu + \delta t$.  Given a block $D$, 
the following definitions may be made:
\begin{enumerate}
\item $J_D := \cap_{\mu \in D} \ker \chi_\mu$
\item For $V \in \mathrm{Ob} K_C$, $Pr_D V := \{ v \in V : \forall \, z \in 
	J_D, \exists \,  N \in \bbZ^+ \text{ such that } z^n v = 0 \}$.
\item $V = \bigoplus_{D_i} Pr_{D_i} V$ is the primary decomposition of $V$.
\item Let $K_D$ be the subcategory of $K_C$ such that $V$ is an object in 
	$K_D$ if and only if $J_D^N V = 0$ for large enough $N$.  $Pr_D$ 
	takes objects in $K_C$ to objects in $K_D$.
\end{enumerate}
\end{definition}

Gabber and Joseph made the following extension of Jantzen's translation 
functors to category $K_C$:
\begin{definition}
Given $\lambda \in \h^*$ antidominant and regular and $\delta \in 
\h^*$ regular, let $D = W_\lambda \lambda + \delta t$.  Given  $\mu \in 
\Lambda$, let $D' = W_\lambda( \lambda + \mu) + \delta t$ and let $F(\mu)_A$ 
denote the finite-dimensional $\g_A$-module with extremal weight $\mu$.  Then 
$$T_D^{D'} V = Pr_{D'}( F( \mu )_A \otimes_A ( Pr_D V) )$$
is Jantzen's translation functor from $D$ to $D'$.
\end{definition}

This leads to their extension of the defintion of coherent continuation 
functors:
\begin{definition}
Let $\lambda \in \h^*$ be antidominant regular, $\delta \in \h^*$ be regular,
$D = W_\lambda \lambda + \delta t$, and let $s_\alpha \in 
W_\lambda$ be a simple reflection.  There exists an integral weight 
$\nu_\alpha$ such that $\lambda - \nu_\alpha$ is antidominant and the only 
simple root $\beta$ for which $(\lambda - \nu_\alpha,\beta) = 0$ is 
$\beta = \alpha$.
Let $D_\alpha = W_\lambda( \lambda -\nu_\alpha) + \delta t$.
Then:
\begin{enumerate}
\item $T_D^{D_\alpha}$ is translation to the $\alpha$-wall,
\item $T_{D_\alpha}^D$ is translation from the $\alpha$-wall,
and
\item $\theta_\alpha = T_{D_\alpha}^D \circ T_D^{D_\alpha}$ is the 
coherent continuation functor across the 
$\alpha$-wall or the reflection functor across the $\alpha$-wall.
\end{enumerate}
\end{definition}

\begin{notation}
Given $\lambda \in \h^*$ regular antidominant, $x \in W_\lambda$, and 
$\delta \in \h^*$ regular, by $ch_s \overline{ M( x \lambda + \delta t)}$  or
$ch_s \overline{ L( x \lambda + \delta t)}$
we mean the limiting signature on the specialized module as $t \to 0^+$.
\end{notation}
For example, we may rewrite Proposition \ref{SKLInterpretation} as
$$ch_s \overline{M( x \lambda + w(-\rho) t )} = \sum_{y \leq x} P^{\lambda,
w}_{w_\lambda^0 x, w_\lambda^0 y} ch_s L( y \lambda ).$$

We have the following results for signature characters and coherent 
continuation:
\begin{theorem} \label{SKLCCTheorem}
Let $\lambda \in \h^*$ be antidominant regular,
$\alpha \in \Pi_\lambda$, and let $s = s_\alpha$.  Let $D = W_\lambda \lambda 
+ w_\lambda^0(-\rho) t$ and let $D_\alpha = W_\lambda( \lambda - \nu_\alpha) + 
w_\lambda^0(-\rho)t$ be a block so that $T_D^{D_\alpha}$ is a translation 
functor to the 
$\alpha$-wall.  Let $\lambda_\alpha^+$ denote the highest weight of 
$F( \nu_\alpha )$ and $\lambda_\alpha^-$ denote the highest weigt of 
$F( - \nu_\alpha )$.  Then:
\begin{enumerate}
\item \label{SCoherent} If $z \in W_\lambda$ and $z < zs$, then:
\begin{eqnarray*}
ch_s \, \overline{ \theta_\alpha M( z \lambda + w_\lambda^0(-\rho) t)} &=&
-(-1)^{\epsilon((\lambda,\alpha^\vee)z\alpha)} 
ch_s \, \overline{ \theta_\alpha M( zs \lambda + w_\lambda^0(-\rho) t)}  \\
&=& 
(-1)^{\epsilon(\lambda_\alpha^+ + \lambda_\alpha^- + z( \lambda, \alpha^\vee)
\alpha)} ch_s \,\overline{M( zs \lambda + w_\lambda^0(-\rho) t)}  \\
&&- (-1)^{\epsilon( \lambda_\alpha^+ + \lambda_\alpha^-)} 
ch_s \, \overline{M( z \lambda + w_\lambda^0(-\rho) t)}
\end{eqnarray*}
\item \label{SCoherentVanish} 
If $z > zs$, then $ch_s \theta_\alpha L( z \lambda ) = 0$.
\item \label{SCoherentSimple}
If $z < zs$, then
\begin{eqnarray*}
ch_s \, \overline{\theta_\alpha L( z \lambda + w_\lambda^0 (-\rho) t )} &=& 
(-1)^{\epsilon(\lambda_\alpha^+ + \lambda_\alpha^- + z(\lambda,\alpha^\vee)
\alpha)} ch_s \, L( zs \lambda ) \\
&&  
-(-1)^{\epsilon(\lambda_\alpha^+ + \lambda_\alpha^-)}
\sum_{y > ys} a^{\lambda,w_\lambda^0}_{w_\lambda^0 z, w_\lambda^0 y, 1}
ch_s L( y \lambda ).
\end{eqnarray*}
\end{enumerate}
\begin{proof}
To prove \eqref{SCoherent}, we begin by proving that if $z < zs$,
\begin{eqnarray*}
ch_s \,\overline{T_D^{D_\alpha} M( z \lambda + w_\lambda^0(-\rho) t )} &=&
- (-1)^{\epsilon((\lambda,\alpha^\vee)z\alpha)}
ch_s \,\overline{T_D^{D_\alpha} M( zs \lambda + w_\lambda^0(-\rho) t )} \\
&=& (-1)^{\epsilon(\lambda_\alpha^- + z \nu_\alpha)} ch_s \, 
\overline{M( z( \lambda - \nu_\alpha) + w_\lambda^0(-\rho) t)}
\end{eqnarray*}
From the formulas at the bottom of p. 183 of \cite{Y},  for every $z$,
$$ch_s \overline{
T_D^{D_\alpha} M( z \lambda + w_\lambda^0( -\rho)t)} = 
\sgn( \bar{c}_z')
ch_s \, \overline{M( z( \lambda - \nu_\alpha ) + w_\lambda^0(-\rho) t)}.$$
The formula for $c_z'$ from the top of p. 184 of \cite{Y} is
$$c_z' = (-1)^{\epsilon(\lambda_\alpha^- + z \nu_\alpha)} \, D_{F(-\nu_\alpha)}
(-z\nu_\alpha) \, a_{-z \nu_\alpha}'.$$
Lemma 4.6.7 of \cite{Y} states that the second term in the product 
always has sign $1$ while Lemma 4.6.9.(i) of \cite{Y} states that the third 
term in the product has sign $1$ if $z < zs$.  Thus $\sgn( \bar{c}_z' ) = 
(-1)^{\epsilon( \lambda_\alpha^- + z \nu_\alpha)}$ if $z < zs$.  If $z > zs$, 
we must determine the sign of the third term, $\sgn( \bar{a}_{-z 
\nu_\alpha}' )$.  Follow the proof 
of 4.6.9.(i) exactly up to the last sentence.  Then note that since 
$(-z \nu_\alpha, z \alpha^\vee ) < 0$, therefore $\sgn(\bar{a}_{-z \nu_\alpha}')
 = -1$.  Thus  $\sgn( \bar{c}_z') = - (-1)^{\epsilon(\lambda_\alpha^- + 
z \nu_\alpha)}$ if $z > zs$; or if $z < zs$, $\sgn( \bar{c}_{zs}') = - 
(-1)^{\epsilon(\lambda_\alpha^- + 
zs \nu_\alpha)}$.  From this and from $z \nu_\alpha - zs \nu_\alpha = 
z( \lambda,\alpha^\vee) \alpha) \alpha$, it follows that 
$ch_s \, \overline{ \theta_\alpha M( z \lambda + w_\lambda^0(-\rho) t)} =
-(-1)^{\epsilon((\lambda,\alpha^\vee)z\alpha)} 
ch_s \, \overline{ \theta_\alpha M( zs \lambda + w_\lambda^0(-\rho) t)}$.

From  Theorem \ref{CCTheorem}, we see that $ch_s \, \theta_\alpha 
\overline{M( z \lambda + w_\lambda^0(-\rho )t )}$ must be a linear combination 
of $ch_s \overline{ M( z \lambda + w_\lambda^0(-\rho)t)}$ and 
of $ch_s \overline{ M( zs \lambda + w_\lambda^0(-\rho)t)}$.  The discussion 
on the bottom of p. 184 and the top of p. 185 of \cite{Y} shows that 
for $z < zs$:
\begin{eqnarray*}
 ch_s \, \theta_\alpha \overline{M( z \lambda + w_\lambda^0( t) )} &=&
(-1)^{\epsilon(\lambda_\alpha^+ - z \nu_\alpha)}\sgn( D_{F(\nu_\alpha)}(z 
\nu_\alpha) \bar{a}_{z\nu_\alpha}'' \bar{c}_z')  \,
ch_s \, \overline{M( z \lambda  + w_\lambda^0(-\rho) t)} \\
&&+ (-1)^{\epsilon(\lambda_\alpha^+ - zs \nu_\alpha)} \sgn( D_{F(\nu_\alpha)}
(zs \nu_\alpha) \bar{a}_{zs \nu_\alpha}'' \bar{c}_z') \,
 ch_s \, \overline{M( zs \lambda + w_\lambda^0(
 -\rho ) t )}.
\end{eqnarray*}
Using Lemma 4.6.7 and Lemma 4.6.9 (ii) and (iii) of \cite{Y} with $\delta 
= w_\lambda^0(-\rho)$ gives values for the signs, and we find that
\begin{eqnarray*}
ch_s \, \overline{ \theta_\alpha M( z \lambda + w_\lambda^0(-\rho) t)} &=&
(-1)^{\epsilon(\lambda_\alpha^+ + \lambda_\alpha^- + z( \lambda, \alpha^\vee)
\alpha)} ch_s \,\overline{M( zs \lambda + w_\lambda^0(-\rho) t)} \\
&& - (-1)^{\epsilon( \lambda_\alpha^+ + \lambda_\alpha^-)} ch_s \, 
\overline{M( z \lambda + w_\lambda^0(-\rho) t)}.
\end{eqnarray*}

Equation (\ref{SCoherentVanish}) follows immediately from Theorem 
\ref{CCTheorem}.

To prove (\ref{SCoherentSimple}), we note that by Lemma 4.6.4 and the 
formula before Proposition 4.6.6 of \cite{Y} (corrected to use appropriate 
notation),
\begin{eqnarray*}
ch_s \overline{ \theta_\alpha L( z \lambda + w_\lambda^0(-\rho) t )} &=& 
\sgn( \bar{c}_{zs}'' \bar{c}_z') ch_s \, L( zs \lambda ) \\
&& + \sgn( \bar{c}_z'' ( w_\lambda^0(-\rho), z\alpha^\vee) \bar{c}_z')
\sum_{y > ys} a^{\lambda,w_\lambda^0}_{w_\lambda^0 z, w_\lambda^0 y,1} ch_s 
L( y \lambda )
\end{eqnarray*}
where $c_z'' = (-1)^{\epsilon( \lambda_\alpha^+ - z \nu_\alpha )} D_{F(
\nu_\alpha)}(z\nu_\alpha) a_{z \nu_\alpha}'' ( w_\lambda^0(-\rho)t, z 
\alpha^\vee)$ and
$c_{zs}'' = (-1)^{\epsilon( \lambda_\alpha^+ - zs \nu_\alpha )} D_{F(
\nu_\alpha)}(zs\nu_\alpha) a_{zs \nu_\alpha}''$.
Using Lemmas 4.6.7 and 4.6.9 of \cite{Y} as before, we obtain the desired
result.
\end{proof}
\end{theorem}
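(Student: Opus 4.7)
The plan is to establish each of the three claims by mimicking the classical Theorem \ref{CCTheorem}, while carefully tracking how tensoring with finite-dimensional representations and projecting onto blocks affect signatures of invariant Hermitian forms. Throughout I would work in Gabber--Joseph's category $K_C$ over $A = \bbC[t]_{(t)}$ and extract limiting signatures via the specialization $\overline{(\cdot)}$, so that signatures on each Jantzen filtration level are automatically encoded.

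For (1), I would decompose $\theta_\alpha = T_{D_\alpha}^D \circ T_D^{D_\alpha}$ and first compute $ch_s\,\overline{T_D^{D_\alpha} M(z\lambda + w_\lambda^0(-\rho)t)}$ for $z < zs$. The to-wall translation of a Verma module is again a Verma module with shifted highest weight $z(\lambda - \nu_\alpha) + w_\lambda^0(-\rho)t$, and the induced invariant form differs from the Shapovalov form by a nonzero scalar $\bar c_z' \in \bbR$. The sign of $\bar c_z'$ has essentially been computed in \cite{Y}: it equals $(-1)^{\epsilon(\lambda_\alpha^- + z\nu_\alpha)}$ when $z < zs$ (by Lemmas 4.6.7 and 4.6.9(i) of \cite{Y}) and flips sign when $z > zs$, since the limiting factor $\bar a_{-z\nu_\alpha}'$ becomes negative there. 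Applying this to both $z$ and $zs$ and using $z\nu_\alpha - zs\nu_\alpha = z(\lambda,\alpha^\vee)\alpha$ yields the first equality of (1). For the second equality I would translate back from the wall: tensoring with $F(\nu_\alpha)_A$ and projecting onto $D$ produces a module whose signature splits into contributions from the two Verma subfactors $M(z\lambda + w_\lambda^0(-\rho)t)$ and $M(zs\lambda + w_\lambda^0(-\rho)t)$, with signs captured by $\bar c_z''$ and $\bar c_{zs}''$. The sign formulas of Lemmas 4.6.7 and 4.6.9(ii)-(iii) of \cite{Y}, specialized to the direction $\delta = w_\lambda^0(-\rho)$, then produce the stated formula. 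Claim (2) is immediate from Theorem \ref{CCTheorem}(3): $\theta_\alpha L(z\lambda) = 0$ when $z > zs$, which forces its signature character to vanish.

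For (3), I would rerun the translation-to-then-from-wall analysis with $L(z\lambda + w_\lambda^0(-\rho)t)$ replacing the Verma module. The composition series of $\theta_\alpha L(z\lambda)$ from Theorem \ref{CCTheorem}(5) identifies the simple submodule, simple quotient, and cohomology $U_\alpha L(z\lambda)$. Using the defining expansion of the signed coefficients $a^{\lambda,w_\lambda^0}_{w_\lambda^0 z, w_\lambda^0 y,1}$ on $U_\alpha L(z\lambda)$, together with the $\bar c_z', \bar c_z''$ sign analysis on the two sandwiching copies of $L(z\lambda)$, assembles the two summands of the stated formula. The main obstacle throughout is the sign bookkeeping: at each step I need the parity of various weight shifts and the signs of the limiting constants $\bar c_z', \bar c_z'', \bar a_{z\nu_\alpha}', \bar a_{z\nu_\alpha}''$. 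This is where the $\lambda_\alpha^+, \lambda_\alpha^-$ dependence originates, and where Lemmas 4.6.7 and 4.6.9 of \cite{Y} do the heavy lifting; without that computational scaffolding the argument reduces to a morass of sign tracking.
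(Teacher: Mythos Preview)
Your proposal is correct and follows essentially the same route as the paper: decompose $\theta_\alpha$ into the to-wall and from-wall translations, invoke the sign constants $\bar c_z', \bar c_z'', \bar a_{\pm z\nu_\alpha}', \bar a_{\pm z\nu_\alpha}''$ from \cite{Y}, evaluate their signs via Lemmas~4.6.7 and 4.6.9(i)--(iii) of \cite{Y} with $\delta = w_\lambda^0(-\rho)$, and use $z\nu_\alpha - zs\nu_\alpha = z(\lambda,\alpha^\vee)\alpha$ to assemble the stated formulas; part~(2) is immediate from Theorem~\ref{CCTheorem}, and part~(3) is the same sign bookkeeping applied to the simple-module version (the paper cites Lemma~4.6.4 and the formula preceding Proposition~4.6.6 of \cite{Y} rather than Theorem~\ref{CCTheorem}(5), but the content is the same).
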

Of course the formulas in this theorem are very simple if $\g_0$ is a 
compact real form.  We will study signature character inversion for  
compact real forms before treating the general case. 

\section{Signature Character Inversion for Compact Real Forms}
The goal of this section is to perform signature character inversion by 
induction to give a simple expression for $ch_s L( x \lambda )$ as a linear 
combination of signature characters of Verma modules, significantly 
improving Theorem 3.2.3 of \cite{Y}.

First, we list the recursive formulas for computing signed Kazhdan-Lusztig 
polynomials for compact real forms.  Substituting into Proposition 
\ref{SKLRecursive} or modifying the recursive formulas for classical 
Kazdhan-Lusztig polynomials knowing that $P_{x,y}(-q) = P_{x,y}^{\lambda, 
w_\lambda^0}(q)$ (see \cite{Y2}, Theorem 4.6), we have:
\begin{lemma}
Suppose $\g_0$ is compact.  Then the signed Kazhdan-Lusztig polynomials 
may be computed using $P_{x,x}^{\lambda,w_\lambda^0}(q) = 1$, 
$P^{\lambda,w_\lambda^0}_{x,y}(q) = 0$
if $x \not \leq y$, and the following recursive formulas where $s = s_\alpha$
where $\alpha \in \Pi_\lambda$:
\begin{itemize}
\item[a)]  If $ys > y$ and $xs > x \geq y$ then:
$$P^{\lambda,w_\lambda^0}_{w_\lambda^0 x, w_\lambda^0 y}(q) =
 P^{\lambda, w_\lambda^0}_{w_\lambda^0 xs, w_\lambda^0 y} (q).$$

\item[a')] If $sy > y$ and $sx > x \geq y$ then:
$$P^{\lambda,w_\lambda^0}_{w_\lambda^0 x, w_\lambda^0 y}(q) =  
P^{\lambda,w_\lambda^0}_{w_\lambda^0 sx, w_\lambda^0 y}(q).$$

\item[b)] If $y > ys$ and $x < xs$ then:
\begin{eqnarray*}
 - P^{\lambda, w_\lambda^0}_{ w_\lambda^0 xs, w_\lambda^0 y}(q) + q P^{\lambda,
w_\lambda^0}_{w_\lambda^0 x,
w_\lambda y} (q) &=& q \sum_{z \in W_\lambda | z < zs} a^{\lambda,
w_\lambda^0}_{w_\lambda^0 z, w_\lambda^0 y} q^{\frac{\ell(z) - \ell(y) - 1}{2}}
P^{\lambda,w_\lambda^0}_{ w_\lambda^0 x, w_\lambda^0 z}(q) \\
&& - P^{\lambda,w_\lambda^0}_{w_\lambda^0 x, w_\lambda^0 ys}(q).
\end{eqnarray*}
\end{itemize}
We could also use the recursive formulas:
\begin{itemize}
\item[a)]  If $ys < y$ and $x, xs \leq y$ then:
$$P^{\lambda,w_\lambda^0}_{x, y}(q) =
 P^{\lambda, w_\lambda^0}_{xs, y} (q).$$

\item[a')] If $sy < y$ and $sx, x \leq y$ then:
$$P^{\lambda,w_\lambda^0}_{x, y}(q) =  
P^{\lambda,w_\lambda^0}_{sx, y}(q).$$

\item[b)] If $y < ys$ and $x > xs$ then:
$$ - P^{\lambda, w_\lambda^0}_{ xs, y}(q) + q P^{\lambda,
w_\lambda^0}_{x, y} (q) = q \sum_{z \in W_\lambda | z > zs} a^{\lambda,
w_\lambda^0}_{z, y} q^{\frac{\ell(y) - \ell(z) - 1}{2}}
P^{\lambda,w_\lambda^0}_{ x, z}(q) 
 - P^{\lambda,w_\lambda^0}_{ x, ys}(q).$$
\end{itemize}
\end{lemma}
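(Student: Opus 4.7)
The plan is to derive both families of recursive formulas by specializing results already proved in the paper, with the compactness hypothesis on $\g_0$ entering exclusively through the vanishing of the $\bbZ_2$-grading $\epsilon$. First I would observe that when $\g_0$ is compact, its Cartan involution is trivial, hence every root of $(\g,\h)$ is compact. Consequently $\epsilon$ is identically zero on the root lattice $\Lambda_r$, and every sign of the form $(-1)^{\epsilon(\mu)}$ appearing in the general theory with $\mu \in \Lambda_r$ equals $+1$. Substituting this observation directly into the three formulas of Proposition \ref{SKLRecursive} collapses all the $\epsilon$-signs, and the residual identities are precisely the first bulleted list of the lemma.

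For the alternative set of formulas I would use the identification $P_{x,y}(-q) = P^{\lambda,w_\lambda^0}_{x,y}(q)$ from \cite{Y2} Theorem 4.6 (the sign multiplier in that theorem trivializes in the compact case for the same reason) and simply substitute $q \mapsto -q$ into the three identities of Theorem \ref{KLRecursiveFormulas}. Formulas a) and a') contain no explicit powers of $q$, so they transport unchanged. Formula b) in the regime $y < ys$, $x > xs$ (that is, $c = 0$) becomes, after substitution, $P^{\lambda,w_\lambda^0}_{xs,y}(q) - q\, P^{\lambda,w_\lambda^0}_{x,y}(q)$ on the left, which upon multiplication by $-1$ matches the left side of the lemma's b). On the right side one must reconcile $(-q)^{(\ell(y)-\ell(z)+1)/2} P_{x,z}(-q)$ with $q\cdot q^{(\ell(y)-\ell(z)-1)/2}P^{\lambda,w_\lambda^0}_{x,z}(q)$ and a change of coefficient $a_{z,y,1} \leadsto a^{\lambda,w_\lambda^0}_{z,y,1}$; nonvanishing of $a_{z,y,1}$ forces $\ell(y)-\ell(z)$ to be odd, and the translation $P^{\lambda,w_\lambda^0}_{z,y}(q) = P_{z,y}(-q)$ forces $a^{\lambda,w_\lambda^0}_{z,y,1} = (-1)^k a_{z,y,1}$ for $k = (\ell(y)-\ell(z)-1)/2$. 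Combining these gives an overall sign that, after multiplying by $-1$ as above, produces the second form of b) in the lemma.

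The only real obstacle is the sign-tracking in the classical-formula approach to b): three independent sources of signs must cancel consistently, namely the explicit powers of $q$ in the recursion, the passage between classical and signed $a$-coefficients, and the evaluation of $P$'s at $-q$. The parity constraint imposed by the nonvanishing of $a_{z,y,1}$ is precisely what makes the three sources line up, so once that parity is recorded the verification is mechanical. Both approaches should of course yield equivalent formulas, which gives a useful internal consistency check.
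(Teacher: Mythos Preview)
Your proposal is correct and follows exactly the approach the paper indicates: the first block of formulas is obtained by specializing Proposition~\ref{SKLRecursive} using $\epsilon \equiv 0$ in the compact case, and the second block by substituting $q \mapsto -q$ in Theorem~\ref{KLRecursiveFormulas} via the identity $P^{\lambda,w_\lambda^0}_{x,y}(q) = P_{x,y}(-q)$ from \cite{Y2}. Your sign-tracking for part~b) is more detailed than what the paper actually writes out, and it is accurate.
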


Theorem \ref{SKLCCTheorem} applied to compact real forms gives:
\begin{lemma} \label{CptCoherent}
Let $\g_0$ be compact.
Let $\lambda \in \h^*$ be antidominant regular,
$\alpha \in \Pi_\lambda$, and let $s = s_\alpha$.
\begin{enumerate}
\item If $z \in W_\lambda$ and $z < zs$, then:
\begin{eqnarray*}
ch_s \, \overline{ \theta_\alpha M( z \lambda + w_\lambda^0(-\rho) t)} &=&
- 
ch_s \, \overline{ \theta_\alpha M( zs \lambda + w_\lambda^0(-\rho) t)}  \\
&=& 
 ch_s \,\overline{M( zs \lambda + w_\lambda^0(-\rho) t)}  
-  ch_s \, \overline{M( z \lambda + w_\lambda^0(-\rho) t)}
\end{eqnarray*}
\item If $z < zs$, then
\begin{eqnarray*}
ch_s \, \overline{\theta_\alpha L( z \lambda + w_\lambda^0 (-\rho) t )} &=& 
 ch_s \, L( zs \lambda ) 
- \sum_{y > ys} a^{\lambda,w_\lambda^0}_{w_\lambda^0 z, w_\lambda^0 y, 1}
ch_s L( y \lambda ).
\end{eqnarray*}
\end{enumerate}
\end{lemma}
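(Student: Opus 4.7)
The plan is to deduce this lemma as an immediate corollary of Theorem~\ref{SKLCCTheorem} by observing that, when $\g_0$ is a compact real form, the $\bbZ_2$-grading $\epsilon$ is identically trivial. Indeed, for a compact real form every root is compact, so any element of the root lattice may be expressed as a sum in which no non-compact root appears, and the parity count defining $\epsilon$ is therefore $0$. Consequently $(-1)^{\epsilon(\mu)} = 1$ for every $\mu$ that arises as an argument of $\epsilon$ in the statement of Theorem~\ref{SKLCCTheorem}.

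Given this observation, the proof is pure substitution. In part~(1) of Theorem~\ref{SKLCCTheorem}, the factor $-(-1)^{\epsilon((\lambda,\alpha^\vee)z\alpha)}$ collapses to $-1$, which yields the first displayed equality of part~(1) of the present lemma. Similarly, the factors $(-1)^{\epsilon(\lambda_\alpha^+ + \lambda_\alpha^- + z(\lambda,\alpha^\vee)\alpha)}$ and $-(-1)^{\epsilon(\lambda_\alpha^+ + \lambda_\alpha^-)}$ reduce to $+1$ and $-1$ respectively, giving the second displayed equality of part~(1). Applying exactly the same simplification to part~(3) of Theorem~\ref{SKLCCTheorem} produces the formula for $ch_s\,\overline{\theta_\alpha L(z \lambda + w_\lambda^0(-\rho) t)}$ asserted in part~(2) of the lemma.

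There is essentially no obstacle to carry out. The only point one might wish to double-check is that each weight appearing as an argument of $\epsilon$ (notably $\lambda_\alpha^+ + \lambda_\alpha^-$ and $z(\lambda,\alpha^\vee)\alpha$) lies in a subset of $\h^*$ on which $\epsilon$ is defined, so that the collapse $(-1)^{\epsilon} = 1$ is legitimate; but this compatibility is already built into the formulation of Theorem~\ref{SKLCCTheorem} and requires no further verification here. In short, the entire content of the lemma is the vanishing of the $\epsilon$-signs in the compact case, and the proof consists of recording that vanishing.
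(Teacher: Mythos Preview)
Your proposal is correct and matches the paper's approach exactly: the paper introduces Lemma~\ref{CptCoherent} with the sentence ``Theorem~\ref{SKLCCTheorem} applied to compact real forms gives:'' and provides no further argument, so specializing via the observation that $\epsilon \equiv 0$ when every root is compact is precisely the intended proof.
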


Our signature character inversion formula is the following:
\begin{theorem}
Let $\g_0$ be compact.
Let $\lambda \in \h^*$ be antidominant and let $x \in W_\lambda$.  Then 
inverting the formula:
$$ch_s \, \overline{M( x \lambda + w_\lambda^0(-\rho) t )} = 
\sum_{y \leq x} P^{\lambda,
w_\lambda^0}_{w_\lambda^0 x, w_\lambda^0 y}(1) ch_s L( y \lambda )$$
gives:
$$ch_s \, L( x \lambda ) = \sum_{y \leq x} (-1)^{\ell(x)-\ell(y)} \, 
P^{\lambda,w_\lambda^0}_{y,x}( 1) \, ch_s \, \overline{M(y \lambda + 
w_\lambda^0(-\rho)t)}.$$
\begin{proof}
We prove this result by induction on $x$.

If $x = 1$, then $M( \lambda )  = L( \lambda )$ while $P^{\lambda, 
w_\lambda^0}_{1,1}(q) = 1$, and the theorem holds.

Suppose that $x > 1$.  By induction, assume that for all $z < x$, 
$$ch_s \, L( z \lambda ) = \sum_{y \leq z} (-1)^{\ell(z)-\ell(y)} \, 
P^{\lambda,w_\lambda^0}_{y,z}( 1) \, ch_s \, \overline{M(y \lambda + 
w_\lambda^0(-\rho)t)}.$$  
We wish to prove the theorem for $x$.  There exists
some $s = s_\alpha$ where $\alpha \in \Pi_\lambda$, so that $\ell(x)  = 
\ell( xs ) + 1$.  By the induction hypothesis, 
$$ch_s \, \overline{L( xs \lambda + w_\lambda^0(-\rho)t )} = \sum_{y \leq xs} 
(-1)^{\ell(xs)-\ell(y)} \, P^{\lambda,w_\lambda^0}_{y,xs}( 1) \, ch_s \, 
\overline{M(y \lambda + w_\lambda^0(-\rho)t)}.$$  
We apply $\theta_\alpha$ to both sides and obtain:
$$ch_s \, \overline{ \theta_\alpha L( xs \lambda + w_\lambda^0(-\rho)t )} = 
\sum_{y \leq xs} (-1)^{\ell(xs)-\ell(y)} \, P^{\lambda,w_\lambda^0}_{y,xs}( 1) 
\, ch_s \, \overline{\theta_\alpha M(y \lambda + w_\lambda^0(-\rho)t)}.$$  
Applying Lemma \ref{CptCoherent} gives:
\begin{eqnarray*}
ch_s \, L( x \lambda ) - \sum_{y > ys} a^{\lambda,w_\lambda^0}_{w_\lambda^0 
xs, w_\lambda^0 y, 1} ch_s \, L( y \lambda ) &= &
\sum_{y \leq xs} (-1)^{\ell(xs)-\ell(y)} \, P^{\lambda,w_\lambda^0}_{y,xs}( 1) 
\times
\\
&&\varepsilon_y \left( ch_s \, \overline{M( y\lambda + w_\lambda^0( -\rho) t)} 
- ch_s \, \overline{M( ys \lambda + w_\lambda^0( -\rho)t )} \right)
\end{eqnarray*}
where $\epsilon_y = 1$ if $y > ys$ and $-1$ if $y < ys$.  Applying the 
induction hypothesis to $ch_s \, L( y \lambda )$ and rearranging, we have:
\begin{eqnarray*}
ch_s \, L( x \lambda ) &=& 
\sum_{y \leq xs} (-1)^{\ell(xs)-\ell(y)} \, P^{\lambda,w_\lambda^0}_{y,xs}( 1) 
\varepsilon_y \left( ch_s \, \overline{M( y\lambda + w_\lambda^0( -\rho) t)} 
- ch_s \, \overline{M( ys \lambda + w_\lambda^0( -\rho)t )} \right) \\
&& + \sum_{y > ys} a^{\lambda,w_\lambda^0}_{w_\lambda^0 xs, w_\lambda^0 y, 1}
\sum_{z \leq y} (-1)^{\ell(y) - \ell(z)} P^{\lambda,w_\lambda^0}_{z,y}(1) 
ch_s \overline{M( z \lambda + w_\lambda^0(-\rho)t)}
\end{eqnarray*}
We wish to show that the right side is equal to $\sum_{y \leq x} (-1)^{
\ell(x)-\ell(y)} P^{\lambda,w_\lambda^0}_{y,x}(1) ch_s \, \overline{ M( y 
\lambda + w_\lambda^0( -\rho)t )}$.  Suppose $z \leq x$.  We compute the 
coefficient of $ch_s \overline{M( z \lambda + w_\lambda^0(-\rho) t)}$ in the 
right side.

{\bf Case 1):}  If $z < zs \leq xs$: the coefficient is
\begin{eqnarray*}
&&- (-1)^{\ell(xs) - \ell(z)}P^{\lambda,w_\lambda^0}_{z,xs}(1) 
 - (-1)^{\ell(xs)-\ell(zs)} P^{\lambda,w_\lambda^0}_{zs, xs}(1)
+ \sum_{y > ys, y \geq z} a^{\lambda,w_\lambda^0}_{w_\lambda^0 xs, w_\lambda^0
y,1} (-1)^{\ell(y) - \ell(z)} P^{\lambda,w_\lambda^0}_{z,y}(1) \\
&=& (-1)^{\ell(x)-\ell(z)} \left( P^{\lambda,w_\lambda^0}_{z,xs}(1)- P^{\lambda,
w_\lambda^0}_{zs,xs}(1) + \sum_{y > ys, y \geq z} a^{\lambda,w_\lambda^0}_{
y, xs, 1} P^{\lambda,w_\lambda^0}_{z,y}(1) \right) \\
&=& (-1)^{\ell(x)-\ell(z)} \left( P^{\lambda,w_\lambda^0}_{z,xs}(1)- P^{\lambda,
w_\lambda^0}_{zs,xs}(1) + \sum_{y > ys, y \geq zs} a^{\lambda,w_\lambda^0}_{
y, xs, 1} P^{\lambda,w_\lambda^0}_{zs,y}(1) \right) \\
&=& (-1)^{\ell(x)-\ell(z)} P^{\lambda,w_\lambda^0}_{zs, x} (1) 
\quad \text{by formula b)} \\
&=& (-1)^{\ell(x) -\ell(z)} P^{\lambda,w_\lambda^0}_{z,x} (1).
\end{eqnarray*}
The first equality holds
since  $a^{\lambda,w_\lambda^0}_{w_\lambda^0 xs, w_\lambda^0 y, 1} \neq 0$ 
implies that $\ell(x)$ and $\ell(y)$ have the same parity, and since 
$P^{\lambda,w_\lambda^0}_{u,v}(q) = P_{u,v}(-q)$ for compact real 
forms implies that 
\begin{eqnarray*}
a^{\lambda,w_\lambda^0}_{w_\lambda^0 xs, w_\lambda^0 y, 1}
&=& (-1)^{\frac{\ell(xs) - \ell(y) - 1}{2}}a_{w_\lambda^0 xs, w_\lambda^0 y, 1} \\
&=& (-1)^{\frac{\ell(w_\lambda^0 y) - \ell(w_\lambda^0 xs) - 1}{2}}
a_{y,xs,1} \quad \text{by \cite{H}, p. 165} \\ 
&=& a^{\lambda,w_\lambda^0}_{y,xs,1}.
\end{eqnarray*}
The second equality holds by formula a) and since for $z < zs$ and $y > 
ys$, by Property Z, $y \geq z$ if and only if $y \geq zs$.

{\bf Case 2):}
If $zs < z \leq xs$: the coefficient of $ch_s \, \overline{M( z\lambda 
+ w_\lambda^0 (-\rho)t)}$ is
\begin{eqnarray*}
&& (-1)^{\ell(xs) - \ell(z)} P^{\lambda,w_\lambda^0}_{z,xs}(1) + 
(-1)^{\ell(xs) - \ell(zs)}P^{\lambda,w_\lambda^0}_{zs,xs}(1) + 
\sum_{y > ys} a^{\lambda,w_\lambda^0}_{w_\lambda^0 xs, w_\lambda^0 y}
(-1)^{\ell(y)-\ell(z)} P^{\lambda,w_\lambda^0}_{z,y}(1) \\
&=& (-1)^{\ell(x) - \ell(z)} \left( - P^{\lambda,w_\lambda^0}_{z,xs}(1) + 
P^{\lambda,w_\lambda^0}_{zs,xs}(1) + 
\sum_{y > ys, y \geq z} a^{\lambda,w_\lambda^0}_{y,xs,1} P^{\lambda,
w_\lambda^0}_{z,y}(1) \right)
\\
&=& (-1)^{\ell(x) - \ell(z)} P^{\lambda,w_\lambda^0}_{z,x}(1) 
\quad \text{by formula b).}
\end{eqnarray*}

{\bf Case 3):}  If $z \not \leq xs$ but $zs \leq xs$:
then $zs < z$ and by Property Z, $z \leq x$.  The coefficient of 
$ch_s \, \overline{ M(z\lambda + w_\lambda^0(-\rho)t)}$ is:
\begin{eqnarray*}
&& (-1)^{\ell(xs) - \ell(zs)} P^{\lambda,w_\lambda^0}_{zs,xs}(1) + 
\sum_{y > ys, y \geq z}
(-1)^{\ell(y) - \ell(z)} a^{\lambda,w_\lambda^0}_{w_\lambda^0 xs, 
w_\lambda^0 y, 1} P^{\lambda,w_\lambda^0}_{z,y}(1) \\
&=& (-1)^{\ell(x) - \ell(z)} \left( P^{\lambda,w_\lambda^0}_{zs,xs}(1)
+ \sum_{y > ys, y \geq z} a^{\lambda,
w_\lambda^0}_{w_\lambda^0 xs, w_\lambda^0 y, 1} P^{\lambda,w_\lambda^0}_{
z,y}(1) \right ) \\
&=& (-1)^{\ell(x) - \ell(z)} P^{\lambda,w_\lambda^0}_{zs,xs}(1) \\
&=& (-1)^{\ell(x) - \ell(z)} P^{\lambda,w_\lambda^0}_{z,x}(1).
\end{eqnarray*}
The second equality holds since $a^{\lambda,w_\lambda^0}_{w_\lambda^0 
xs, w_\lambda^0 y, 1} \neq 0$ implies that $y \leq xs$ while 
$P_{z,y} \neq 0$ implies that $z \leq y$.  There are no $y$ so that 
$z \leq y \leq xs$, so the sum is zero.

The third equality holds by, for example, formula 4.1c) of \cite{dC} and by 
Theorem 4.6 of \cite{Y2}.

{\bf Case 4):}  If $z \leq xs$ but $zs \not \leq xs$:  then $zs > z$ 
and by Property Z, $zs \leq x$.
The coefficient of 
$ch_s \, \overline{ M(z\lambda + w_\lambda^0(-\rho)t)}$ is:
\begin{eqnarray*}
&& -(-1)^{\ell(xs) - \ell(z)} P^{\lambda,w_\lambda^0}_{z,xs}(1)
+ \sum_{y > ys, y \geq z} (-1)^{\ell(y) - \ell(z)} 
a^{\lambda,w_\lambda^0}_{w_\lambda xs, w_\lambda y, 1} 
P^{\lambda,w_\lambda^0}_{z,y}(1) \\
&=& (-1)^{\ell(x)-\ell(z)} \left( P^{\lambda,w_\lambda^0}_{z,xs}(1) -
P^{\lambda,w_\lambda^0}_{zs,xs}(1)  + \sum_{y > ys, y \geq zs}
a^{\lambda,w_\lambda^0}_{y,xs,1} P^{\lambda,w_\lambda^0}_{zs,y}(1)
\right )
\quad \text{since } P^{\lambda,w_\lambda^0}_{zs,xs} = 0 \\
&=& (-1)^{\ell(x)-\ell(z)} P^{\lambda,w_\lambda^0}_{zs,x}(1)  
= (-1)^{\ell(x)-\ell(z)} P^{\lambda,w_\lambda^0}_{z,x}(1).
\end{eqnarray*}
In all cases, we see that the theorem holds for $x$.  By induction, the 
theorem holds in general.
\end{proof}
\end{theorem}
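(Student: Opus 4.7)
The plan is to mirror exactly the inductive proof of the classical Kazhdan-Lusztig inversion formula given in Section 4, but replacing the classical coherent continuation formulas of Theorem \ref{CCTheorem} with their signed analogues in Lemma \ref{CptCoherent}, and replacing the classical recursive formulas of Theorem \ref{KLRecursiveFormulas} with the signed recursive formulas stated in the lemma preceding the theorem. I proceed by induction on $x$ in the Bruhat order on $W_\lambda$. The base case $x=1$ is immediate, since $M(\lambda) = L(\lambda)$ and $P^{\lambda,w_\lambda^0}_{1,1}(q) = 1$.

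For the inductive step, choose a simple reflection $s = s_\alpha$ with $\alpha \in \Pi_\lambda$ such that $xs < x$. By the induction hypothesis applied to $xs$, I have an expression for $ch_s \, \overline{L(xs\lambda + w_\lambda^0(-\rho)t)}$ as a signed combination of $ch_s \, \overline{M(y\lambda + w_\lambda^0(-\rho)t)}$ with coefficients $(-1)^{\ell(xs)-\ell(y)}P^{\lambda,w_\lambda^0}_{y,xs}(1)$. Apply $\theta_\alpha$ to both sides. On the left, Lemma \ref{CptCoherent}(2) produces $ch_s\, L(x\lambda)$ together with error terms $-\,a^{\lambda,w_\lambda^0}_{w_\lambda^0 xs, w_\lambda^0 y,1}\, ch_s\, L(y\lambda)$ for $y > ys$ (which have $y \leq xs < x$, so the induction hypothesis applies). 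On the right, Lemma \ref{CptCoherent}(1) rewrites each $ch_s\,\overline{\theta_\alpha M(y\lambda+\cdots)}$ as a signed difference of $ch_s\, \overline{M(y\lambda+\cdots)}$ and $ch_s\, \overline{M(ys\lambda+\cdots)}$. Solving for $ch_s\, L(x\lambda)$ and substituting the induction hypothesis into the error terms expresses $ch_s\, L(x\lambda)$ as a linear combination of the Verma signature characters $ch_s\, \overline{M(z\lambda + w_\lambda^0(-\rho)t)}$ for $z \leq x$.

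The core of the proof is then to verify that the coefficient of each $ch_s\, \overline{M(z\lambda+w_\lambda^0(-\rho)t)}$ reduces to the predicted value $(-1)^{\ell(x)-\ell(z)}P^{\lambda,w_\lambda^0}_{z,x}(1)$. As in Section 4, this splits into four cases based on the Bruhat relationships between $z, zs$ and $xs$: (i) both $z, zs \leq xs$; (ii) $zs < z$, $z \leq xs$; (iii) $z \not\leq xs$ but $zs \leq xs$; and (iv) $z \leq xs$ but $zs \not\leq xs$. Each case is reduced via recursive formula b) for signed Kazhdan-Lusztig polynomials (or its analogue for the unprimed indexing), with the vanishing sums in cases (iii) and (iv) handled by the usual observation that no $y$ satisfies $z \leq y \leq xs$ or $zs \leq y \leq xs$.

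The main obstacle is bookkeeping in the case analysis rather than any deep new idea. Three technical points must be handled carefully. First, the identity $a^{\lambda,w_\lambda^0}_{w_\lambda^0 xs, w_\lambda^0 y,1} = a^{\lambda,w_\lambda^0}_{y,xs,1}$, which I obtain from the classical identity $a_{w_\lambda^0 xs, w_\lambda^0 y,1} = a_{y,xs,1}$ and the relation $P^{\lambda,w_\lambda^0}_{u,v}(q) = P_{u,v}(-q)$ of \cite{Y2}; the matching power of $-1$ arises because $a^{\lambda,w_\lambda^0}_{w_\lambda^0 xs, w_\lambda^0 y,1} \neq 0$ forces $\ell(x)$ and $\ell(y)$ to have the same parity, allowing the sign $(-1)^{\ell(y)-\ell(z)}$ to be rewritten as $(-1)^{\ell(x)-\ell(z)}$. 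Second, Property Z is invoked in cases (i) and (iv) to interchange sums indexed by $y \geq z$ with sums indexed by $y \geq zs$ whenever $z < zs$ and $y > ys$. Third, in cases (iii) and (iv) I use that the resulting bare $P^{\lambda,w_\lambda^0}_{zs,xs}$ equals $P^{\lambda,w_\lambda^0}_{z,x}$ by the signed analogue of formula 4.1c) of \cite{dC}, again via Theorem 4.6 of \cite{Y2}. With these three tools in place, each case collapses to a single application of formula b), and the induction closes.
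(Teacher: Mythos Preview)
Your proposal is essentially identical to the paper's proof: the same induction, the same application of $\theta_\alpha$ via Lemma \ref{CptCoherent}, the same case split on the Bruhat relations among $z$, $zs$, $xs$, and the same three technical tools.

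One small slip to correct in your outline: in your case (iv) ($z \leq xs$, $zs \not\leq xs$, hence $z < zs$) the sum over $y>ys$, $y\geq z$ does \emph{not} vanish, since $z\leq xs$ allows many such $y$. The paper handles this case not by a vanishing argument but by using Property~Z and formula~a) to replace $P^{\lambda,w_\lambda^0}_{z,y}$ by $P^{\lambda,w_\lambda^0}_{zs,y}$ and the range $y\geq z$ by $y\geq zs$, then inserting the term $-P^{\lambda,w_\lambda^0}_{zs,xs}(1)=0$ so that formula~b) applies with first index $zs$; this yields $P^{\lambda,w_\lambda^0}_{zs,x}$, which equals $P^{\lambda,w_\lambda^0}_{z,x}$ by formula~a). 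The appeal to formula 4.1c) of \cite{dC} is needed only in case (iii).
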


\section{Signature Character Inversion for Equal Rank Real Forms}
An easy change of basis argument extends the signature character inversion 
theorem from compact real forms to all equal rank real forms.
Recall that signed Kazhdan-Lusztig polynomials for equal rank real forms 
and classical Kazhdan-Lusztig polynomials are closely related:
\begin{proposition}(\cite{Y2}, Theorem 4.6)
Suppose $\g_0$ is an equal rank real form.
Let $\lambda$ be antidominant and let $x, y \in W_\lambda$.  Signed 
Kazhdan-Lusztig polynomials and classical Kazhdan-Lusztig polynomials 
have the following relationship:
$$P^{\lambda,w_\lambda^0}_{x,y}(q) = (-1)^{\epsilon( x \lambda - 
y \lambda )} P_{x,y}(-q)$$
where $\epsilon$ is the $\bbZ_2$-grading on the (imaginary) root lattice.  
That is, $\epsilon( \mu )$ is the parity of the number of non-compact roots 
in an expression for $\mu$ as a sum of roots.
\end{proposition}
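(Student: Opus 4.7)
The plan is to establish $P^{\lambda,w_\lambda^0}_{x,y}(q) = (-1)^{\epsilon(x\lambda - y\lambda)} P_{x,y}(-q)$ by induction, exploiting the fact that both families of polynomials are uniquely determined by their base conditions ($P_{x,x}=1$, $P_{x,y}=0$ for $x \not\leq y$) together with the recursive formulas of Theorem \ref{KLRecursiveFormulas} and Proposition \ref{SKLRecursive}. Define $Q_{x,y}(q) := (-1)^{\epsilon(x\lambda - y\lambda)} P_{x,y}(-q)$; the claim reduces to verifying that $Q$ satisfies the same base conditions and recursions as $P^{\lambda,w_\lambda^0}$, whence uniqueness forces $Q = P^{\lambda,w_\lambda^0}$.

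The base cases are immediate: $Q_{x,x}(q) = (-1)^{\epsilon(0)} \cdot 1 = 1$, and $Q_{x,y}(q) = 0$ for $x \not\leq y$. For the inductive step I would proceed on $\ell(y) - \ell(x)$. The pivotal sign identity is
\begin{equation*}
x\lambda - xs\lambda = (\lambda,\alpha^\vee)\, x\alpha,
\end{equation*}
which combined with the additivity of $\epsilon$ yields $\epsilon(x\lambda - y\lambda) \equiv \epsilon((\lambda,\alpha^\vee) x\alpha) + \epsilon(xs\lambda - y\lambda) \pmod{2}$. This is precisely the sign factor appearing in recursion (a) of Proposition \ref{SKLRecursive}: applying the induction hypothesis to the right-hand side of (a) and then invoking classical recursion (a) of Theorem \ref{KLRecursiveFormulas} (read at $-q$) recovers $Q_{x,y}(q)$ on the left. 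An entirely analogous argument, using $x\lambda - sx\lambda = (x\lambda,\alpha^\vee) \alpha$, handles recursion (a').

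The main obstacle is recursion (b), which carries several distinct sign factors of the form $(-1)^{\epsilon((\mu,\mu\alpha^\vee)\mu\alpha)}$ as well as a sum over intermediate elements $z$ weighted by coefficients $a^{\lambda,w_\lambda^0}_{w_\lambda^0 z, w_\lambda^0 y}$. I would treat it by: (i) identifying, via the same additivity of $\epsilon$, the coefficient $a^{\lambda,w_\lambda^0}_{w_\lambda^0 z, w_\lambda^0 y}$ with the classical $a_{w_\lambda^0 z, w_\lambda^0 y, 1}$ up to a sign determined by $\epsilon(z\lambda - y\lambda)$ and the parity of $\frac{\ell(z)-\ell(y)-1}{2}$; (ii) showing that for each summand, the product of the inductive prefactor on $P^{\lambda,w_\lambda^0}_{w_\lambda^0 x, w_\lambda^0 z}$ and the sign attached to $a^{\lambda,w_\lambda^0}_{w_\lambda^0 z, w_\lambda^0 y}$ telescopes, via $\epsilon(x\lambda - z\lambda) + \epsilon(z\lambda - y\lambda) \equiv \epsilon(x\lambda - y\lambda)$, to the desired prefactor on $Q_{x,y}$; and (iii) checking that the parity shifts from the exponents $\frac{\ell(z)-\ell(y)-1}{2}$ of $q$ reassemble consistently when $q$ is replaced by $-q$ in classical recursion (b). The two ``lone'' sign factors $(-1)^{\epsilon((x\lambda, x\alpha^\vee)x\alpha)}$ and $(-1)^{\epsilon((ys\lambda, ys\alpha^\vee)ys\alpha)}$ should collapse by the same $x\lambda - xs\lambda = (\lambda,\alpha^\vee)x\alpha$ identity (and its image under $ys$). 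Once this sign bookkeeping is verified term by term, the classical recursion (b) with $q \mapsto -q$ gives the signed recursion (b), closing the induction and establishing the proposition.
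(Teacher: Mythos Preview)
The paper does not prove this proposition at all; it is simply quoted as Theorem~4.6 of \cite{Y2} and then used. So there is no ``paper's own proof'' to compare against here.

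That said, your strategy is exactly the right one, and it is in fact the approach taken in \cite{Y2}: the parenthetical ``corrected in the proof of \cite{Y2} Theorem 4.6'' attached to Proposition~\ref{SKLRecursive} tells you that the proof in \cite{Y2} proceeds precisely by matching the signed recursions against the classical ones. One point your sketch glosses over is the index shift. Proposition~\ref{SKLRecursive} gives recursions for $P^{\lambda,w_\lambda^0}_{w_\lambda^0 x,\, w_\lambda^0 y}$ under hypotheses like $ys>y$, whereas Theorem~\ref{KLRecursiveFormulas} gives recursions for $P_{x,y}$ under hypotheses like $ys<y$; the substitution $u=w_\lambda^0 x$, $v=w_\lambda^0 y$ reverses the Bruhat order and is what aligns the two. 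You should make that translation explicit before doing the sign bookkeeping, since otherwise it is easy to match the wrong clauses. Beyond that, your outline of the telescoping of $\epsilon$ via $x\lambda - xs\lambda = (\lambda,\alpha^\vee)x\alpha$ and the handling of the $q\mapsto -q$ parity in recursion~(b) is correct in spirit; carrying it out is routine but tedious, which is presumably why the present paper simply cites the result.
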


Therefore:
\begin{theorem}
Let $\g_0$ be an equal rank real form, $\lambda \in \h^*$ be antidominant, 
and let $x \in W_\lambda$.  Then inverting the formula:
$$ch_s \, \overline{M( x \lambda + w_\lambda^0(-\rho) t )} = 
\sum_{y \leq x} P^{\lambda,
w_\lambda^0}_{w_\lambda^0 x, w_\lambda^0 y}(1) ch_s L( y \lambda )$$
gives:
$$ch_s \, L( x \lambda ) = \sum_{y \leq x} (-1)^{\ell(x)-\ell(y)} \, 
P^{\lambda,w_\lambda^0}_{y,x}( 1) \, ch_s \, \overline{M(y \lambda + 
w_\lambda^0(-\rho)t)}.$$
\begin{proof}
In the case where $\g_0$ is compact, $P^{\lambda,w_\lambda^0}_{x,y}(q) = 
P_{x,y}(-q)$.  We have shown in this case that
$$ch_s \, \overline{M( x \lambda + w_\lambda^0(-\rho) t )} = 
\sum_{y \leq x} P_{w_\lambda^0 x, w_\lambda^0 y}(-1) ch_s L( y \lambda )$$
inverts to give:
$$ch_s \, L( x \lambda ) = \sum_{y \leq x} (-1)^{\ell(x)-\ell(y)} \, 
P_{y,x}( -1) \, ch_s \, \overline{M(y \lambda + 
w_\lambda^0(-\rho)t)}.$$
Note that in the equal rank case, 
$$ch_s \, \overline{M( x \lambda + w_\lambda^0(-\rho) t )} = 
\sum_{y \leq x} (-1)^{\epsilon(x\lambda-y\lambda)} P_{w_\lambda^0 x, 
w_\lambda^0 y}(-1) ch_s L( y \lambda ),$$
so the equal rank case differs from the compact case by conjugating the 
change of basis matrix by the diagonal matrix with entries 
$(-1)^{\epsilon(x\lambda)}$ in the $(x,x)$-positions on the diagonal.  
(Extend $\epsilon$ naturally to $\bbQ \Lambda_r$.)  It thus follows that 
the inverse change of basis matrix also differs by conjugation by the 
diagonal matrix with $(-1)^{\epsilon(x\lambda)}$ on the diagonal, whence
\begin{eqnarray*}
ch_s \, L( x \lambda ) &=& \sum_{y \leq x} (-1)^{\ell(x)-\ell(y)} \, 
(-1)^{\epsilon(x\lambda-y\lambda)} P_{y,x}( -1) \, ch_s \, 
\overline{M(y \lambda + w_\lambda^0(-\rho)t)} \\
&=& \sum_{y \leq x} (-1)^{\ell(x)-\ell(y)} \, 
 P^{\lambda,w_\lambda^0}_{y,x}( 1) \, ch_s \, 
\overline{M(y \lambda + w_\lambda^0(-\rho)t)}, \\
\end{eqnarray*}
proving our theorem for equal rank real forms.
\end{proof}
\end{theorem}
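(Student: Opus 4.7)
The plan is to leverage the compact case, which has already been proved in the previous section, together with the relation $P^{\lambda,w_\lambda^0}_{x,y}(q) = (-1)^{\epsilon(x\lambda - y\lambda)} P_{x,y}(-q)$ recorded just above the theorem. The key observation is that passing from the compact setting to a general equal rank real form modifies each entry of the Verma-to-irreducible change of basis matrix by a sign that factors as a product of a function of the row index and a function of the column index, so the modification is a two-sided conjugation by a diagonal sign matrix.

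First I would fix $\lambda$ antidominant and let $E$ denote the diagonal matrix, indexed by $x \in W_\lambda$ with $x \leq $ some sufficiently large element, whose $(x,x)$-entry is $(-1)^{\epsilon(x\lambda)}$, where $\epsilon$ is extended $\bbZ$-linearly to the root lattice. Since $\epsilon$ is additive, the bilinear sign $(-1)^{\epsilon(x\lambda - y\lambda)}$ equals $(-1)^{\epsilon(x\lambda)} (-1)^{\epsilon(y\lambda)}$; note $E = E^{-1}$. Let $P^{\text{cpt}}$ denote the matrix with $(x,y)$-entry $P_{w_\lambda^0 x, w_\lambda^0 y}(-1) = P^{\lambda,w_\lambda^0}_{w_\lambda^0 x, w_\lambda^0 y}(1)$ in the compact case, and let $P^{\text{eq}}$ denote the matrix with $(x,y)$-entry $P^{\lambda,w_\lambda^0}_{w_\lambda^0 x, w_\lambda^0 y}(1)$ in the general equal rank case. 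The proposition gives
\begin{equation*}
P^{\text{eq}}_{x,y} = (-1)^{\epsilon(w_\lambda^0 x \lambda - w_\lambda^0 y \lambda)} P^{\text{cpt}}_{x,y},
\end{equation*}
and since $\epsilon$ is $W$-equivariant on the root lattice (permutation of roots preserves the parity count up to the action on compact vs.\ non-compact types, but on the root lattice $\epsilon(w\mu) = \epsilon(\mu)$ since $w$ permutes the compact and non-compact roots amongst themselves in the equal rank case — this should be checked but is standard), this reduces the comparison to the conjugation identity $P^{\text{eq}} = E\, P^{\text{cpt}}\, E$.

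Next I would write the expansion proved in the previous section in matrix form as $ch_s \, \overline{M(\cdot)} = P^{\text{cpt}} \, ch_s \, L(\cdot)$ in the compact case, inverted to $ch_s \, L(\cdot) = (P^{\text{cpt}})^{-1} ch_s \, \overline{M(\cdot)}$, whose $(x,y)$-entry is $(-1)^{\ell(x)-\ell(y)} P_{y,x}(-1)$. In the equal rank case, Proposition \ref{SKLInterpretation} gives the Verma expansion with matrix $P^{\text{eq}} = E P^{\text{cpt}} E$; inverting yields
\begin{equation*}
(P^{\text{eq}})^{-1} = E^{-1} (P^{\text{cpt}})^{-1} E^{-1} = E (P^{\text{cpt}})^{-1} E,
\end{equation*}
whose $(x,y)$-entry is $(-1)^{\epsilon(x\lambda - y\lambda)} (-1)^{\ell(x)-\ell(y)} P_{y,x}(-1)$. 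Applying the proposition once more identifies this with $(-1)^{\ell(x)-\ell(y)} P^{\lambda,w_\lambda^0}_{y,x}(1)$, which is precisely the claimed coefficient.

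The main obstacle is verifying the $W_\lambda$-equivariance of $\epsilon$ carefully enough to identify $(-1)^{\epsilon(w_\lambda^0 x \lambda - w_\lambda^0 y \lambda)}$ with $(-1)^{\epsilon(x\lambda - y\lambda)}$, and more basically checking that the bilinear sign factorizes as $(-1)^{\epsilon(x\lambda)}(-1)^{\epsilon(y\lambda)}$. Once the factorization and conjugation identity are in place, the rest is formal: matrix inversion commutes with conjugation by an involutive diagonal matrix, so the claimed sign appears automatically without any need to redo the Case 1--Case 4 combinatorial analysis from the compact case.
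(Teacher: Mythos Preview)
Your approach is essentially identical to the paper's: reduce to the compact case via the relation $P^{\lambda,w_\lambda^0}_{x,y}(q) = (-1)^{\epsilon(x\lambda-y\lambda)}P_{x,y}(-q)$, observe that the equal-rank change-of-basis matrix is a diagonal conjugate of the compact one, and invert. The paper carries this out in exactly the same way, defining the diagonal matrix with entries $(-1)^{\epsilon(x\lambda)}$ (after extending $\epsilon$ to $\bbQ\Lambda_r$) and using $E=E^{-1}$.

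One caution: your parenthetical justification for the $W_\lambda$-equivariance of $\epsilon$ is wrong. The Weyl group does \emph{not} in general permute compact roots among themselves and non-compact roots among themselves in the equal-rank case; for instance in $\mathfrak{su}(2,1)$ with compact simple root $\alpha_1$ and non-compact simple root $\alpha_2$, one has $s_{\alpha_2}(\alpha_1)=\alpha_1+\alpha_2$, which is non-compact. The paper does not address this point either --- it simply writes the sign as $(-1)^{\epsilon(x\lambda-y\lambda)}$ rather than the $(-1)^{\epsilon(w_\lambda^0 x\lambda - w_\lambda^0 y\lambda)}$ that a direct substitution into the Proposition would yield --- so your flagged ``obstacle'' is a genuine point that both arguments pass over and that would need a separate justification.
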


\section{Conclusion}
Although the classification of unitary highest weight modules has been 
solved by work of Enright-Howe-Wallach, it would be interesting to recover the 
classification using signed Kazhdan-Lusztig polynomials and the formulas in 
this paper and in \cite{Y2}.  Cohomological induction applied to highest 
weight modules produces Harish-Chandra modules for which signatures can be 
recorded using signed Kazhdan-Lusztig-Vogan polynomials.  Techniques used to 
identify unitary representations among highest weight modules may very well 
have analogues for Harish-Chandra modules.

\bibliographystyle{alpha}
\bibliography{Inversion}
\end{document}